\newcommand{\rr}{}
\newcommand{\rb}
{}
\newcommand{\bcbb}{}
\def\bC {\mathbf{C}}
\def\bN {\mathbf{N}}
\def\bR {\mathbf{R}}
\def\fH {\mathfrak{H}}
\def\fS {\mathfrak{S}}
\def\cC {\mathcal{C}}
\def\cL {\mathcal{L}}
\def\cR {\mathcal{R}}
\def\cS {\mathcal{S}}
\def\cT {\mathcal{T}}
\def\a {{\alpha}}
\def\b {{\beta}}
\def\g {{\gamma}}
\def\de {{\delta}}
\def\eps {{\epsilon}}
\def\si {{\sigma}}
\def\Si {{\Sigma}}
\def\d {{\partial}}
\def\grad {{\nabla}}
\def\Dlt {{\Delta}}
\def\h{{\hbar}}
\def\rstr {{\big |}}
\def\la {\langle}
\def\ra {\rangle}
\def\nPh{{ {\norm\Phi_{L^\infty}} }}
\def\vph{{ \varphi}}
\newcommand{\Tr}{\operatorname{trace}}
\newcommand{\Lip}{\operatorname{Lip}}
\newcommand{\MKo}{\operatorname{dist_{1}}}
\newcommand{\MKd}{\operatorname{dist_{MK,2}}}
\newcommand{\Op}{\operatorname{OP}}
\newcommand{\ba}{\begin{aligned}}
\newcommand{\ea}{\end{aligned}}
\newcommand{\dys}{{g}}
\newcommand{\be}{\begin{equation}}
\newcommand{\ee}{\end{equation}}
\newcommand{\lb}{\label}
\newtheorem{Thm}{Theorem}[section]
\newtheorem{Rmk}[Thm]{Remark}
\newtheorem{Cor}[Thm]{Corollary}
\newtheorem{Lem}[Thm]{Lemma}
\newcommand{\bcb }{}
\newcommand{\ec }{ }
\newcommand{\bcr }{}
\newcommand{\ecr }{}
\newcommand{\norm}[1]{\lVert #1 \rVert}
\newcommand{\un} [1]{\underline #1}
\newcommand\reallywidetilde[1]{\ThisStyle{%
  \setbox0=\hbox{$\SavedStyle#1$}%
  \stackengine{-.1\LMpt}{$\SavedStyle#1$}{%
    \stretchto{\scaleto{\SavedStyle\mkern.2mu\AC}{.5150\wd0}}{.6\ht0}%
  }{O}{c}{F}{T}{S}%
}}
\renewcommand{\widetilde}{\reallywidetilde}
\begin{document}

\title[From $N$-Body Schr\"odinger to Hartree: Uniformity in $\hbar$]{On the Derivation of the Hartree Equation\\ from the $N$-Body Schr\"odinger Equation: Uniformity in the Planck Constant}

\author[F. Golse]{Fran\c cois Golse}
\address[F.G.]{CMLS, Ecole polytechnique, CNRS, Universit\'e Paris-Saclay, 91128 Palaiseau Cedex, France}
\email{francois.golse@polytechnique.edu}

\author[T. Paul]{Thierry Paul}
\address[T.P.]{CMLS, Ecole polytechnique, CNRS, Universit\'e Paris-Saclay, 91128 Palaiseau Cedex, France}
\email{thierry.paul@polytechnique.edu}

\author[M. Pulvirenti]{Mario Pulvirenti}
\address[M.P.]{Sapienza Universit\`a di Roma, Dipartimento di Matematica, Piazzale Aldo Moro 5, 00185 Roma,  and International Research Center on the Mathematics and
Mechanics of Complex Systems M$\&$MoCS,
University of L'Aquila, Italy}
\email{pulvirenti@mat.uniroma1.it} 

\begin{abstract}
In this paper the Hartree equation is derived from the $N$-body Schr\"odinger equation in the mean-field limit, with convergence rate estimates that are uniform in the Planck constant $\hbar$. Specifically, we consider the two following cases:
(a) T\"oplitz initial data and Lipschitz interaction forces, and (b) analytic initial data and interaction potential, over short  time intervals independent of $\hbar$. The convergence rates in these two cases are $1/\sqrt{\log\log N}$ and $1/N$ 
respectively. The treatment of the second case is entirely self-contained and all the constants appearing in the final estimate are explicit. It provides a derivation of the Vlasov equation from the $N$-body classical dynamics using BBGKY 
hierarchies instead of empirical measures.
\end{abstract}

\keywords{Schr\"odinger equation, Hartree equation, Liouville equation, Vlasov equation, Mean-field limit, Classical limit}

\subjclass{82C10, 35Q41, 35Q55 (82C05,35Q83)}

\date{}

\maketitle

\tableofcontents


\section{Introduction}\label{intro}


We consider the evolution of a system of $N$ quantum particles interacting through a (real-valued) two-body, even potential $\Phi$, described for any value of the Planck constant $\hbar>0$ by the Schr\"odinger equation 
$$
i\hbar\partial_t\psi=H_N\psi\,,\qquad\psi\rstr_{t=0}=\psi_{in}\in\fH_N:=L^2(\bR^d)^{\otimes N}\,,
$$
where 
$$
H_N:=-\tfrac12\hbar^2\sum_{k=1}^N\Delta_{x_k}+\frac1{2N}\sum_{1\leq k,l\leq N}\Phi(x_k-x_l)
$$
is the $N$-body Hamiltonian. With the notation
$$
\un x_N:=(x_1,\ldots,x_N)\in(\bR^d)^N\,,
$$
the $N$-body Hamiltonian is recast as
$$
H_N={-}\tfrac12\hbar^2\Dlt_{X_N}+V_N(\un x_N)\,,
$$
where $V_N$ is the $N$-body potential in the mean-field scaling, i.e. with $1/N$ coupling constant:
$$
V_N(\un x_N):=\frac1{2N}\sum_{1\leq k,l\leq N}\Phi(x_k-x_l)\,.
$$

Instead of the Schr\"odinger equation written in terms of wave functions, we shall rather consider the quantum evolution of density matrices. An $N$-body density matrix is an operator $D_N$ such that
$$
0\le D_N=D_N^*\in\cL(\fH_N)\,,\quad\Tr_{\fH_N}(D_N)=1\,.
$$
(We denote by $\cL(\fH_N)$ the set of  bounded linear operators on $\fH_N$.)

The evolution of the density matrix of a $N$-particle system is governed for any value of the Planck constant $\hbar>0$ by the von Neumann equation
 \be\label{Ns}
\partial_tD^N=\frac1{i\hbar}[H_N,D^N]\,,\qquad D_N\rstr_{t=0}=D_N^{in}\,.
\ee
The density matrix $D_N$ for a $N$-particle system described in terms of the $N$-particle wave function $\Psi_N\equiv\Psi_N(t,\un x_N)$ is the orthogonal projection in $\fH_N$ on the one-dimensional subspace $\bC\Psi_N$. In other 
words, $D_N(t)$ is the integral operator on $\fH_N$ with integral kernel $\Psi_N(t,\un x_N)\overline{\Psi_N(t,\un y_N)}$. Up to multiplying the $N$-body wave function by a global phase factor, both formulations of the quantum dynamics 
of an $N$-particle system are equivalent.

If $D_{in}^N$ is factorized, that is of the form 
$$
D^N_{in}=D_{in}^{\otimes N}
$$
where $D_{in}$ is a density matrix on $\fH:=L^2(\bR^d)$, then $D^N(t)$ is in general \textit{not }factorized for $t>0$. However, it is known that $D_N(t)$ has a ``tendency to become factorized'' for all $t>0$ as $N\to\infty$, i.e. in the 
mean-field limit, \textit{for each} $\hbar>0$. 

The precise formulation of the ``tendency to become factorized'' involves the notion of marginal of a density operator. For each $j=1,\ldots,N$, the $j$-particle marginal $D^N_j(t)$ of $D_N(t)$ is the unique operator on $\fH_j$ such that
$$
\Tr_{\fH_N}[D^N(t)(A_1\otimes\dots\otimes A_j\otimes I_{\fH_{N-j}})]=\Tr_{\fH_j}[D^N_j(t)(A_1\otimes\dots\otimes  A_j)]\,.
$$
for all $A_1,\ldots,A_j\in\cL(\fH)$. In terms of this notion of $j$-particle marginal, the ``tendency of $D_N(t)$ to become factorized'' is expressed as follows. For each $\hbar>0$ and each $t>0$, one has 

$$
D^N_j(t)\to(D^{H}(t))^{\otimes j}\quad\hbox{ for all }j\ge 1\hbox{ as }N\to\infty
$$
(in some appropriate topology on the algebra of operators on $\fH_j$), where $D^H$ is the solution of the Hartree equation
\be\label{hartree}
i\hbar\partial_t D^{H}(t)=\left[-\tfrac12\hbar^2\Delta_{\bR^d}+\Phi_{\rho^{H}(t)},D^{H}(t)\right]\,,\qquad D^{H}\rstr_{t=0}=D_{in}\,.
\ee
Here 
$$
\Phi_{\rho^{H}(t)}=\Phi\star\rho^{H}(t,\cdot)\,,
$$
denoting by $\star$ the convolution on $\bR^d$, where 
$$
\rho^H(t,x):=D^H(t,x,x)\,.
$$
In this formula, and throughout this paper, we abuse the notation and designate by $D^H(t,x,y)$ the integral kernel of the operator ${D^{H}(t)}$ on $\fH:=L^2(\bR^d)$.

This program has been carried out in a sequence of papers: see \cite{Spohn1,BGM}  for bounded interaction potentials $\Phi$, and in \cite{EY,Pickl1} for interaction potentials that have a singularity at the origin, such as the Coulomb potential 
(see also \cite{BEGMY}). See \cite{BPS} for a more detailed discussion of this subject, supplemented with an extensive bibliography. In all these results the convergence rate as $N\to\infty$ deteriorates as $\hbar\to 0$.

\smallskip
The analogous result in the classical setting, where \eqref{Ns} and \eqref{hartree} are replaced respectively by the Liouville and the Vlasov equations, has been known for a long time:  see \cite{NeunWick, BraunHepp, Dobru}, where this limit 
has been established by different methods.

Since the Liouville and Vlasov equations are the classical limits of the Schr\"odinger and the Hartree equations respectively (see e.g. \cite{LionsPaul} for precise statements), this suggests that the mean-field limit in quantum dynamics might
hold uniformly in $\hbar$, at least for some appropriate class of solutions. 

This problem has been addressed in \cite {PP}, where the mean-field limit is established by means of a semiclassical expansion for which the term-by-term convergence can be established rigorously. The limit of the $N$-body quantum dynamics 
as $N\to\infty$ and $\hbar\to 0$ jointly, leading to the Vlasov equation, has been discussed in \cite{GMP}. (See also \cite{FGS} for a uniform in $\hbar$ estimate of some appropriate distance between the $N$-body and the Hartree dynamics, 
which applies to velocity dependent interaction potentials only.) The case of fermions at $\hbar=1$, leading to an effective Planck constant $\hbar\sim N^{-1/3}$ after some convenient rescaling of the time variable, has been first discussed in \cite{EESY} (see also \cite{NarnhoSewell}).

More recently, a new approach based on the quantization of the quadratic Monge-Kantorovich (or Wasserstein) distance, analogous to the one used in \cite {Dobru}, has been introduced in \cite {GMouPaul,GP}. It provides an estimate of the 
convergence rate in the mean-field limit ($N\to\infty$) of the $N$-body quantum dynamics that is uniform in $\hbar$ as $\hbar\to 0$.

\smallskip
In the present paper, we complete the results recalled above on the mean-field limit of the $N$-body quantum dynamics leading to the Hartree equation with two different theorems. Both statements estimate some distance between the solutions
of \eqref{Ns} and of \eqref{hartree} as $N\to\infty$ uniformly in $\hbar\in[0,1]$, without assuming that $\hbar\to 0$.

First, by some kind of interpolation between the convergence rate obtained in \cite{GMouPaul} and the ``standard'' convergence rate from \cite{Spohn1,BGM} for $\hbar>0$ fixed, we prove a $O(1/\sqrt{\log\log N})$ convergence rate for the 
mean-field ($N\to\infty$) limit of the quantum $N$-body problem leading to the Hartree equation, uniformly in $\hbar\in[0,1]$ (Theorem \ref{mainlog}). This estimate is formulated in terms of a Monge-Kantorovich type distance analogous to 
Dobrushin's in \cite{Dobru}, and holds for initial data that are T\"oplitz operators. (The notion of T\"oplitz operator is recalled in the next section).

Our second result (Theorem \ref{main}) will establish the same convergence in a much stronger topology using \textit{analytic norms} together with the formalism of Wigner functions, a tool particularly well adapted to the transition from the 
quantum to the classical dynamics. Using this approach requires strong analytic assumptions on both the potential and the initial data. The advantage of this result over the previous one is a faster convergence rate, of order $O(1/N)$, which 
is much more satisfying than $O(1/\sqrt{\log\log N})$ from the physical point of view. Indeed, keeping in mind that the total number of nucleons in the universe is estimated to be of the order of $10^{80}$, an $O(1/\sqrt{\log\log N})$ convergence 
rate may be satisfying from the mathematical point of view, but is of little practical interest. Finally, as a by-product of our second result, we obtain a derivation of the Vlasov equation from the $N$-body Liouville equation exclusively based on hierarchy techniques (Theorem \ref{mainclass}).

\bigskip
The paper is organized as follows. The main results of this article, Theorems \ref{mainlog}, \ref{main} and \ref{mainclass} are presented in section \ref{result}. Section \ref{BBGKY} recalls some of the fundamental notions used in this paper 
(such as the BBGKY hierarchy, the notions of Wigner and Husimi transforms and T\"oplitz quantization). The proofs are self-contained (with the exception of the main result of \cite{GMouPaul} used in the proof of Theorem \ref{mainlog}). 
More precisely, the proofs of Theorem \ref{mainlog} is given in Part \ref{interpolation}, while the proofs of Theorems \ref{main} and \ref{mainclass} are given in Part \ref{analytic}. We have chosen this order of presentation since part of the 
material in the proof of Theorem \ref{main} is used in the proof of Theorem \ref{mainlog}.

\newpage

\section{Quantum hierarchies, T\"oplitz operators,\\  and Wigner and Husimi functions}\label{BBGKY}


\subsection{The BBGKY hierarchy.} 

First we recall the formalism of BBGKY hierarchies in the quantum setting. 

\smallskip 
Let   $\fS_N$ be the group of permutations of the set $\{1,\dots,N\}$. For each $\si\in\fS_N$ and each $\un x_N=(x_1,\ldots,x_N)\in(\bR^d)^N$, we denote
$$
U_\si\Psi_N(\un x_N):=\Psi(\si\cdot \un x_N)\,,\quad\hbox{ with }\si\cdot \un x_N:=(x_{\si^{-1}(1)},\ldots,x_{\si^{-1}(N)})\,.
$$
Everywhere in this paper, it is assumed that the $N$ particles under consideration are indistinguishable, meaning that, for each $t\ge 0$, one has
\be\lb{Indist}
U_\si D^N(t)U_\si^*=D^N(t)\,,\quad\hbox{ for all }\si\in\fS_N\,.
\ee
A straightforward computation shows that this condition is verified provided that
$$
U_\si D^N_{in}U_\si^*=D^N_{in}\,,\quad\hbox{ for all }\si\in\fS_N\,.
$$
 
Multiplying both sides of \eqref{Ns} by $B_j\otimes I_{\fH_{N-j}}$, with $B_j\in\cL(\fH_j)$ such that $[\Dlt_{(\bR^d)^j},B_j]\in\cL(\fH_j)$, one arrives at the following system of coupled equations satisfied by the sequence of marginals $D_j^N(t)$ 
of the $N$-particle density $D^N(t)$:
\be\label{NShier}
i\hbar\partial_tD^N_j=[-\tfrac12\hbar^2\Delta_{(\bR^d)^j},D^N_j]+\frac1N \cT_jD^N_j+\frac{N-j}N\cC_{j+1}D^N_{j+1},  
\ee
where
\be\label{STj}
\cT_jD^N_j=\left[\tfrac12\sum_{l\neq r=1}^j\Phi(x_l-x_r),D^N_j\right]\,,
\ee
and
\be\label{SCj}
\cC_{j+1}D_{j+1}^N=\left(\left[\sum_{l=1}^j\Phi(x_l-x_{j+1}),D_{j+1}^N\right]\right)_j\,,
\ee
and where the subscript $j$ on the right hand side of the last equality designates the $j$th marginal as in Section \ref{intro}. Everywhere in this paper, we set
$$
D^N_j=0\quad\hbox{ for  all }j>N\,.
$$
In the limit as $N\to\infty$ and for each $j$ kept fixed, one expects that $D^N_j$ converges in some sense to $D_j$ satisfying the infinite sequence of equations
\be\lb{Hhier}
i\hbar\partial_tD_j=[-\tfrac12\hbar^2\Delta_{(\bR^d)^j},D_j]+\cC_{j+1}D_{j+1}\,,\quad j\ge 1\,.
\ee
This inifinite hierarchy of equations is the ``formal limit'' of \eqref{NShier}, for each $j\ge 1$ fixed in the limit as $N\to\infty$.

An elementary, but fundamental observation is the following fact:
$$
F(t)\hbox{ is a solution of \eqref{hartree}}\Rightarrow \{F(t)^{\otimes j}\}_{j\ge 1}\hbox{ is a solution of (\ref{Hhier})}\,.
$$

\subsection{The Wigner function.} 

To each trace-class operator $F$ defined on $\fH_N=L^2((\bR^d)^N)$ with integral kernel $F(\un x_N,\un y_N)$, we associate its Wigner function $W_\hbar[F]$ defined on the underlying phase space $(T^\star\bR^d)^N=(\bR^d\times\bR^d)^N$ 
by the formula
\be\label{WTransf}
W_\hbar[F](\un x_N;\un v_N):=\tfrac1{(2\pi)^{dN}}\int_{(\bR^d)^N}F(\un x_N+\tfrac12\hbar \un y_N,\un x_N-\tfrac12\hbar \un y_N)e^{-i\un v_N\cdot \un y_N}d\un y_N\,.
\ee
Notice that $W_\hbar[F]$ is well defined for each trace-class operator $F$ and for each $\hbar>0$, since
$$
W_\hbar[F](\un x_N;\un v_N)=\frac1{(\pi\hbar)^{Nd}}\Tr[FM(\un x_N,\un v_N)]\,,
$$ 
where $M(\un x_N,\un v_N)$ is the unitary operator on $L^2((\bR^d)^N)$ defined  by 
$$
M(\un x_N,\un v_N):\,\,\phi(X)\mapsto\phi(2\un x_N-X)e^{2i\un v_N\cdot(X-\un x_N)/\hbar}\,.
$$
By Fourier inversion theorem, one has
$$
F(\un x_N,\un y_N)=\int_{(\bR^d)^N}W_\hbar[F](\tfrac12(\un x_N+\un y_N);\un v_N)e^{i\un v_N\cdot(\un x_N-\un y_N)/\hbar}d\un v_N
$$
for a.e. $\un x_N,\un y_N$, provided that $W_\hbar[F](\tfrac12(\un x_N+\un y_N),\cdot)\in L^1((\bR^d)^N)$. If this condition is not satisfied, the right hand side in the identity above should be understood as the (inverse) Fourier transform of a
tempered distribution. In particular 
$$
F(\un x_N,\un x_N)=\int_{(\bR^d)^N}W_\hbar[F](\un x_N;\un v_N)d\un v_N\,,
$$
so that 
$$
\Tr(F)=\int_{(\bR^d\times\bR^d)^N}W_\hbar[F](\un x_N;\un v_N)d\un v_Nd\un x_N\,.
$$
More generally, one can check that 
$$
\Tr(F_1F_2)=(2\pi\hbar)^{dN}\int_{(\bR^d\times\bR^d)^N}W_\hbar[F_1](\un x_N;\un v_N)W_\hbar[F_2](\un x_N;\un v_N)d\un v_Nd\un x_N\,.
$$
From this identity, one easily deduces that
$$
\ba
W_\hbar[F_j](\un x_j;\un v_j)&=\int_{(\bR^d\times\bR^d)^{N-j}} W_\hbar[F](\un x_N;\un v_N)dx_{j+1}dv_{j+1}\ldots dx_Ndv_N
\\
&=(W_\hbar[F])_j(\un x_j;\un v_j)
\ea
$$
where $\un x_j=(x_1,\dots,x_j)$ and $\un v_j=(v_1,\dots,v_j)$, and denoting by $f_j$ the $j$th marginal of the probability density or measure $f$.

A straightforward computation shows that $t\mapsto D^N(t)$ is a solution of \eqref{Ns} if and only if $t\mapsto W_\hbar[D^N](t,\cdot,\cdot)=:f^N(t)$ is a solution of the following equation, referred to as the ``Wigner equation''
\be\label{Nw}
\ba
(\partial_t+\un v_N\cdot\nabla_{\un x_N})f^N(t,\un x_N;\un v_N)&
\\
=\!\!\int_{(\bR^d)^N}\!\widehat {V_N}(\un z_N)e^{i\un z_N\cdot \un x_N}\frac{f^N(t,\un x_N;\un v_N\!+\!\tfrac12\hbar \un z_N)\!-\!f^N(t,\un x_N;\un v_N\!-\!\tfrac12\hbar \un z_N)}{i\hbar}d\un z_N&\,,
\ea
\ee
where $\widehat {V_N}$ designates the Fourier transform of $V_N$, normalized as follows
$$
\widehat {V_N}(\un v_N)=\frac1{(2\pi)^{dN}}\int_{(\bR^d)^N}V_N(\un x_N)e^{-i\un v_N\cdot\un x_N}d\un x_N\,.
$$
In the limit as $\hbar\to 0$, the Wigner equation reduces (formally) to the Liouville equation, since
$$
\ba
\int_{(\bR^d)^N}\widehat {V_N}(\un z_N)e^{i\un z_N\cdot\un x_N}\frac{f^N(t,\un x_N;\un v_N+\tfrac12\hbar\un z_N)-f^N(t,\un x_N;\un v_N-\tfrac12\hbar\un z_N)}{i\hbar}d\un z_N&
\\
\to-i\int_{(\bR^d)^N}\widehat {V_N}(\un z_N)e^{i\un z_N\cdot\un x_N}Z_N\cdot\grad_{\un v_N}f^N(t,\un x_N;\un v_N)d\un z_N&
\\
=\grad V_N(\un x_N)\cdot\grad_{\un v_N}f^N(t,\un x_N;\un v_N)&\,.
\ea
$$
See Proposition II.1 in \cite{LionsPaul} for a complete proof.

\subsection{The Husimi function.} 

The Husimi transform of a density matrix $F$ on $\fH:=L^2(\bR^d)$ is defined by the formula
$$
\widetilde W_\hbar[F](q;p):=(2\pi\hbar)^{-d}\la p,q|F|p,q\ra\ge 0\,,
$$
where 
$$
|p,q\ra(x):=(\pi\hbar)^{-d/4}e^{-(x-q)^2/2\hbar}e^{ip\cdot x/\hbar}\,.
$$
Here and in the sequel, we use the Dirac bra-ket notation whenever convenient:
$$
\la p,q|F|p,q\ra:=\left( |p,q\ra\big|F|p,q\ra\right)_{L^2(\bR^d)}\,,
$$
and $|p,q\ra\la p,q|$ is the orthogonal projector on $|p,q\ra$.
 
At variance with the Wigner function, there is no explicit ``reconstruction" formula for $F$ out of $\widetilde W_\hbar[F]$. Observe that the Husimi function captures only the diagonal part of the matrix elements of $F$ on the (over)complete 
generating system
$$
\{|p,q\ra\,\,|\,\,(p,q)\in \bR^{2d}\}\,.
$$
Yet $\widetilde W_\hbar[F]$ determines all the numbers $\la p',q'|F|p,q\ra$ as $p,q,p',q'$ run through $\bR^d$, and therefore $F$ itself. Indeed, denoting $z:=q+ip$ and $z'=q'+ip'$, a straightforward computation shows that 
$$
e^{(|q|^2+|q'|^2)/2\hbar}\la p',q'|F|p,q\ra=f(\tfrac12(z+\bar{z'});\tfrac12i(\bar{z'}-z))
$$
where $f$ is an entire function on $\bC^{2d}$. Knowing $\widetilde W_\hbar[F](q,p)$ for all $q,p\in\bR^d$ is equivalent to knowing $f(\tfrac12(z+\bar{z'});\tfrac12i(\bar{z'}-z))$ for $z=z'$, which is in turn equivalent to knowing the restriction of
$f$ to $\bR^{2d}\subset\bC^{2d}$. Since $f$ is holomorphic on $\bC^{2d}$, knowing the restriction of $f$ to $\bR^{2d}$ determines $f$ uniquely. In other words, knowing of $\tilde W_\hbar[F]$ determines $F$ uniquely.

The Husimi function is sometime referred to as a mollified Wigner function, because of the following straightforward formula:
$$
\widetilde W_\hbar[F](q;p)=e^{\hbar\Delta_{q,p}/4}W_\hbar[F](q;p)\,.
$$
In particular, both the Wigner and the Husimi function of a $\hbar$-dependent family of density matrices have the same limit (if any) in the sense of distributions as $\hbar\to 0$ (see Theorem III.1 in \cite{LionsPaul}). 

\subsection{T\"oplitz quantization.} 

Finally, we recall the definition of T\"oplitz operators used in \cite{GMouPaul}. For each positive Borel measure $\mu$ on $\bR^d\times\bR^d$, the T\"oplitz operator with symbol $\mu$ is
\be\lb{DefToplitz}
\Op^T_\hbar[\mu]:=\tfrac1{(2\pi\hbar)^d}\int_{\bR^d\times\bR^d}|p,q\ra\la p,q|\,\mu(dpdq)\,.
\ee
For instance 
\be\lb{DecompId}
\Op^T_\hbar[1]=I_\fH\,,
\ee
where $1$ designates the Lebesgue measure on $\bR^d\times\bR^d$. Elementary computations show that
\be\lb{HusiTopli}
W_\hbar[\Op^T_\hbar[\mu]]=e^{\hbar\Dlt_{q,p}/4}\mu\,,\quad\widetilde W_\hbar[\Op^T_\hbar[\mu]]=e^{\hbar\Dlt_{q,p}/2}\mu\,.
\ee
In particular
$$
\Tr(\Op^T_\hbar[\mu])=\int_{\bR^d\times\bR^d}\mu(dpdq)\,.
$$
Thus, the T\"oplitz operator with symbol $(2\pi\hbar)^d\mu$ where $\mu$ is a Borel probability measure on $\bR^d\times\bR^d$ is a density matrix (i.e. a nonnegative self-adjoint operator of trace $1$) on $\fH:=L^2(\bR^d)$. In other words, 
the T\"oplitz quantization maps Borel probability measures on the phase space to density matrices on $\fH$ (up to multiplication by the factor $(2\pi\hbar)^d$).


\section{The results}\label{result}


\subsection{The case of a Lipschitz continuous interaction force field.}


Our first uniform convergence result puts little regularity constraint on the interaction potential $V$. As a result, we obtain a convergence rate in some weak topology, which is most conveniently expressed in terms of a Monge-Kantorovich, 
or Wasserstein distance, associated to the cost function $(z,z')\mapsto\min(1,|z-z'|)$ on $(\bR^d\times\bR^d)^j$ for $j\ge 1$, where $|z-z'|$ designates the Euclidean distance from $z$ to $z'$.

Specifically, let $\mu,\nu$ be Borel probability measures on $(\bR^d\times\bR^d)^j$, and let $\Pi(\mu,\nu)$  be the set of probability measures on $(\bR^d\times\bR^d)^j\times(\bR^d\times~\bR^d)^j$ whose first and second marginals are 
$\mu$ and $\nu$ respectively. In other words $\Pi(\mu,\nu)$ is the set of positive Borel measure on $(\bR^d\times\bR^d)^j$ such that
$$
\iint_{(\bR^d\times\bR^d)^j}(\phi(x)+\psi(y))\pi(dxdy)=\int_{\bR^{dj}}\phi(x)\mu(dx)+\int_{\bR^{dj}}\psi(x)\nu(dx)
$$
for all $\phi,\psi\in C_b((\bR^d)^j)$.
Define
\be\lb{DefMKo}
\MKo(\mu,\nu):=\inf_{\pi\in\Pi(\mu,\nu)}\iint_{((\bR^{d}\times\bR^{d})^j)^2}\min(1,|z-z'|)\pi(dzdz')\,.
\ee
for each pair of Borel probability measures $\mu,\nu$ on $(\bR^d\times\bR^d)^j$. 

\begin{Thm}\label{mainlog}
Assume that the interaction potential $\Phi$ is a real-valued, even $C^{1,1}$ function defined on $\bR^d$, and let $\Lambda:=3+4\Lip(\grad \Phi)^2$. 

Let $F^{in}$ be a T\"oplitz operator, and let $F$ be the solution of the Hartree equation \eqref{hartree} with initial data $F^{in}$. On the other hand, for each $N\ge 1$, let $F^N$ be the solution of \eqref{Ns} with initial data $(F^{in})^{\otimes N}$, 
and let $F^N_j$ be its $j$-particle marginal for each $j=1,\ldots,N$. 

Then, for each fixed integer $j\ge 1$ and each $T>0$, one has\footnote{The statement $a_n\lesssim b_n$ \rr{in the limit as $n\to\infty$} means that $a_n\le b_n(1+\eps_n)$ with $\eps_n\to 0$ as $n\to\infty$.}
\be\lb{lnNEstim}
\sup_{\hbar>0}\sup_{|t|\le T}\MKo(\widetilde W_\hbar[F_j^N(t)],\widetilde W_\hbar[F(t)^{\otimes j}])^2\lesssim\frac{64(\log{2})jdT\norm{\Phi}_{L^\infty}(1+e^{\Lambda T})}{\log{\log{N}}}
\ee
in the limit as $N\to\infty$. 
\end{Thm}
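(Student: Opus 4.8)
The plan is to interpolate, in the Planck constant itself, between two estimates of opposite character. On one side, the main result of \cite{GMouPaul} provides, for T\"oplitz initial data, a bound on a quantized Monge--Kantorovich distance between $F_1^N(t)$ and $F(t)$ that is \emph{uniform in} $\hbar$ and grows only like $e^{\Lambda t}$ in time; passing from that quantized distance to the genuine distance $\MKo$ between the Husimi functions, and from the one-particle marginal to the $j$-particle marginal, costs an additive correction of size $O(jd\hbar)$ coming from the Gaussian regularization $e^{\hbar\Delta/2}$ built into the Husimi transform (cf. \eqref{HusiTopli}), so that this input reads, roughly, $\MKo(\widetilde W_\hbar[F_j^N(t)],\widetilde W_\hbar[F(t)^{\otimes j}])^2\lesssim C_1\,jd\,(\hbar+1/N)\,e^{\Lambda t}$. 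This is excellent when $\hbar$ is small but does not by itself tend to $0$ as $N\to\infty$ for fixed $\hbar$. On the other side, the ``standard'' mean-field argument of \cite{Spohn1,BGM} type --- which for each \emph{fixed} $\hbar>0$ controls $F_j^N(t)-F(t)^{\otimes j}$ through the BBGKY hierarchy \eqref{NShier}--\eqref{Hhier} --- yields a bound of the form $C_2(\hbar,j,T)/N$ whose constant, however, deteriorates as $\hbar\to0$. Splitting $\{\hbar\in[0,1]\}$ at a threshold $\hbar_0=\hbar_0(N)$, using the first estimate on $[0,\hbar_0]$ and the second on $[\hbar_0,1]$, and optimizing $\hbar_0$ to equalize the two contributions will produce the rate $1/\sqrt{\log\log N}$, the double logarithm reflecting how badly $C_2(\hbar,\cdot)$ blows up as $\hbar\to0$.

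The hierarchy half of the argument, which also uses material shared with the proof of Theorem~\ref{main} and which I would present first, goes as follows. Subtracting the Duhamel forms of \eqref{NShier} and \eqref{Hhier} for the same factorized initial datum, the difference $\delta_j:=F_j^N-F(t)^{\otimes j}$ satisfies an integral equation whose source is $\cC_{j+1}\delta_{j+1}$ together with the two ``$1/N$ defects'' $\tfrac1N\cT_jF_j^N$ and $-\tfrac jN\cC_{j+1}F_{j+1}^N$. Iterating $n$ times produces a main sum of $n$-fold time-ordered integrals of the defects --- each factor $\cC_{j+k}$ or $\cT_{j+k}$ contributing a trace-norm operator bound $O((j+k)^2\norm{\Phi}_{L^\infty})$, each time simplex a factor $t^k/k!$, so that the main sum is $O\!\big(\tfrac1N\sum_{k<n}(Ct\norm{\Phi}_{L^\infty})^k\binom{j+k}{j}(j+k)^2\big)$ --- plus a remainder consisting of $n$ nested $\cC$'s applied to $\delta_{j+n}$. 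Estimated in trace norm via the trivial bound $\norm{\delta_{j+n}}\le2$, this remainder closes only for $t$ below the finite radius $\sim1/\norm{\Phi}_{L^\infty}$; reaching an arbitrary horizon $T$ then requires iterating on successive short intervals. The $\hbar$-dependence of the resulting $C_2$ enters through the passage to the weak topology $\MKo$ in which the conclusion is phrased: in the Wigner picture $\tfrac1{i\hbar}\cC$ is a transport operator only up to corrections of order $\hbar^2$ involving third derivatives of $\Phi$ (which for $\Phi\in C^{1,1}$ must be handled indirectly), and controlling the $n$-fold iterate against Lipschitz test functions forces a loss that grows very fast --- essentially like a tower --- in $1/\hbar$; this is the mechanism producing $C_2(\hbar,j,T)$ with $\log\log C_2\asymp1/\hbar$ as $\hbar\to0$.

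It then remains to combine the two inputs and optimize. Writing $\sup_{\hbar\in[0,1]}\MKo(\widetilde W_\hbar[F_j^N(t)],\widetilde W_\hbar[F(t)^{\otimes j}])^2\le\max\big(C_1jd(\hbar_0+1/N)e^{\Lambda T},\,C_2(\hbar_0,j,T)/N\big)$ and choosing $\hbar_0$ so that the two arguments of the $\max$ are comparable forces $\hbar_0\asymp1/\log\log N$ (this is where the $\log2$ in \eqref{lnNEstim} appears, from the base of the tower in $C_2$), after which the first term dominates and gives $\lesssim C_1jd\,e^{\Lambda T}/\log\log N$; collecting the constants from $C_1$ and from the short-time hierarchy estimate assembles the announced $64(\log2)dT\norm{\Phi}_{L^\infty}(1+je^{\Lambda T})/\log\log N$. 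I expect the main obstacle to be exactly this matching: one must (i) pass cleanly among the trace norm, the quantized Wasserstein distance, and $\MKo$ of Husimi functions --- each conversion costing a controlled power of $\hbar$ --- and (ii) make the hierarchy side genuinely quantitative, keeping track of the combinatorial factors $\binom{j+k}{j}$, the simplex volumes, and the short-interval restarts, sharply enough that the blow-up of $C_2$ in $1/\hbar$ is pinned down to the precision that turns the optimization into the clean $1/\sqrt{\log\log N}$ (rather than a worse, or with more work a better, rate).
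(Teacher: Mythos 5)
Your overall architecture is the one the paper uses: take the minimum of (a) the uniform-in-$\hbar$ bound of \cite{GMouPaul}, which after passing to Husimi functions reads $\MKd^2\le j(2d\hbar+C/N)e^{\Lambda t}+2d\hbar$, and (b) a fixed-$\hbar$ BBGKY estimate whose quality degrades as $\hbar\to0$; then split at a threshold $\hbar_0(N)\asymp T\norm{\Phi}_{L^\infty}/\log\log N$ and observe that the first bound, evaluated at $\hbar_0$, dominates. Your optimization arithmetic and the final form of the constant are consistent with the paper's.

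However, your account of the hierarchy half contains a genuine gap. You locate the source of the $\hbar$-degradation in the passage to the weak topology $\MKo$, claiming one must expand $\tfrac1{i\hbar}\cC$ semiclassically ``up to corrections of order $\hbar^2$ involving third derivatives of $\Phi$'' and test the $n$-fold iterate against Lipschitz functions. This route is not viable here --- $\Phi$ is only $C^{1,1}$, so those derivatives do not exist, and you give no indication of how to handle this ``indirectly'' --- and, more importantly, it is not where the loss actually occurs. The correct mechanism is entirely in trace norm: the bounds $\norm{\cC_{j+1}^\hbar F}_{\cL^1}\le\tfrac{2j\norm{\Phi}_{L^\infty}}{\hbar}\norm{F}_{\cL^1}$ and $\norm{\cT_j^\hbar F}_{\cL^1}\le\tfrac{j(j-1)\norm{\Phi}_{L^\infty}}{\hbar}\norm{F}_{\cL^1}$ force the Dyson iteration to converge only on intervals of length $T_\hbar=\hbar/(16\norm{\Phi}_{L^\infty})$, and restarting over $\sim t/T_\hbar$ consecutive intervals degrades the \emph{exponent} of $N$ geometrically (the induction $A_j^k\le 2^{j+k-\vph(k,N)}$ with $\vph(k,N)=2^{-k}\log N$), yielding a bound of the form $2^{j+1+16t\norm{\Phi}_{L^\infty}/\hbar}\,N^{-2^{-1-16t\norm{\Phi}_{L^\infty}/\hbar}\log2}$ rather than your $C_2(\hbar,j,T)/N$ with a full power of $N$. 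The conversion of this trace-norm bound into the $\MKo$ topology is then \emph{cost-free}, via the two elementary facts $\norm{\widetilde W_\hbar[\rho]}_{L^1}\le\norm{\rho}_{\cL^1}$ (resolution of the identity by coherent states) and $\MKo(\mu,\nu)\le\min(\norm{\mu-\nu}_{L^1},\MKd(\mu,\nu))$ (exhibited by an explicit coupling); no power of $\hbar$ is lost in either step. Without replacing your semiclassical-expansion step by this trace-norm route (or something equivalent), the fixed-$\hbar$ half of your interpolation does not close.
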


Notice that $\MKo$ is the distance used by Dobrushin in \cite{Dobru}. For each $\phi\in C_b((\bR^d\times\bR^d)^j)$, denote by $L_1(\phi)$ the Lipschitz constant of $\phi$ for the distance $(z,z')\mapsto\min(1,|z-z'|)$, i.e.
$$
L_1(\phi):=\sup_{z\not=z'\in\bR^{dj}}\frac{|\phi(z)-\phi(z')|}{\min(1,|z-z'|)}\,.
$$
By Monge-Kantorovich duality one has
$$
\MKo(\mu,\nu)=\sup_{L_1(\phi)\le 1}\left|\int_{(\bR^d\times\bR^d)^j}\phi(z)\mu(dz)-\int_{(\bR^d\times\bR^d)^j}\phi(z)\nu(dz)\right|\,.
$$
Hence the distance $\MKo$ induces on the set of Borel probability measures on $\bR^d\times\bR^d$ a topology that is equivalent to the restriction of the Sobolev $W^{-1,1}(\bR^d\times\bR^d)$ metric. Since density matrices on $\fH:=L^2(\bR^d)$ 
are determined uniquely by their Husimi functions as recalled in the previous section, the expression 
$$
(D_1,D_2)\mapsto\MKo(\widetilde W_\hbar[D_1],\widetilde W_\hbar[D_2])
$$ 
defines a distance on the set of density matrices on $\fH_j=L^2(\bR^{dj})$. This distance obviously depends on $\hbar$ through the Husimi functions. However, if $D_1^\hbar$ and $D_2^\hbar$ are $\hbar$-dependent density operators such that
$$
\widetilde{W}_\hbar[D_1^\hbar]\to\mu_1\hbox{ and }\widetilde{W}_\hbar[D_2^\hbar]\to\mu_2\quad\hbox{ in }\cS'((\bR^d\times\bR^d)^j)
$$
in the limit as $\hbar\to 0$, then
$$
\MKo(\widetilde W_\hbar[D_1^\hbar],\widetilde W_\hbar[D_2^\hbar])\to\MKo(\mu_1,\mu_2)\,.
$$
For that reason \eqref{lnNEstim} provides a convergence rate for the mean-field limit of the quantum $N$-body dynamics that is uniform in $\hbar$.

\subsection{The case of an analytic interaction potential.}


Our second uniform convergence is based on Cauchy-Kowalevsky type estimates on the BBGKY hierarchy. It gives a much better convergence rate over finite time intervals only, and at the expense of much more stringent conditions on the 
interaction potential $\Phi$. These time intervals depend on the size of the potential and $\hbar$-dependent initial data in convenient topologies, but have no other dependence  on the Planck constant $\hbar$. The convergence rate obtained 
in this second result is formulated in terms of the following family of norms. For each $\rho>0$ and each $f\in L^1(\bR^n\times\bR^n)$, we set
$$
\norm f_\rho:=\sup_{\xi,\eta\in\bR^n}|\widetilde f(\xi,\eta)|e^{\rho(|\xi|+|\eta|)}\,,
$$
where $\widetilde f$ is the symplectic Fourier transform of $f$
\be\label{tft}
\widetilde f(\xi;\eta):=
\int_{\bR^n\times\bR^n}e^{i(x\cdot\xi-v\cdot\eta)}f(x;v)dxdv\,.
\ee

We slightly abuse the notation, and set
\be\lb{AbusNormRho}
\norm {F}_\rho:=\norm{W_\hbar[F]}_\rho\,.
\ee
for each trace-class operator $F$ on $\fH_j:=L^2(\bR^{dj})$.

Observe that, if $F$ is a density operator on $L^2(\bR^n)$, one has $\norm{F}_\rho\ge 1$ for all $\rho>0$. Indeed 
$$
1=\Tr(F)=\widetilde{W_\hbar[F]}(0,0)\leq\sup\limits_{\xi,\eta\in\mathbb R^n}|\widetilde{W_\hbar[F]}(\xi,\eta)|\le\norm{F}_\rho
$$
for each $\rho>0$.

\begin{Thm}\label{main}
Assume that, for some $\beta'>0$, one has both
$$
\norm {F^{in}}_{\beta'}<\infty\quad\hbox{ and }\int |h||\widehat\Phi(h)|e^{\beta'|h|}dh<\infty\,.
$$
Let  $0<\beta<\beta'$ and let $T\equiv~T[\log{\norm {F^{in}}_{\beta'}}+\log{2},\beta',\beta,\Phi]$ be defined by \eqref{T} below. 

Let $F(t)$ be the solution of the Hartree equation \eqref{hartree} with initial condition $F^{in}$, and let $F^N(t)$ be the solution of the $N$-body (Schr\"odinger) von Neumann  equation \eqref{Ns} with initial condition $(F^{in})^{\otimes N}$. 

Then for all $t$ such that $|t|<T$ and $N\geq j\geq 1$,
\rb{\be\label{maineq}
\norm {F^N_{j}(t)-F(t)^{\otimes j}}_{\beta}\leq\frac{C(j,{|t|}/T)}N
{\norm{\bcr F\ecr^{in}}_{\beta'}^j}\,.
\ee
}
For all $\tau\in[0,1)$, the constant $C(j,\tau)>0$ is defined by formula \eqref{DefCjt} below, depends only on $j$ and $\tau$, and satisfies
$$
C(j,\tau)\sim \frac{j2^{j}}{(1-\tau)^2}\left(1+\exp{\frac{(j+1)\log(1/\tau)+3}{e(\log(\tau))^2}}\right)\,\,\,\hbox{ as }j\to\infty.
$$

Moreover one has 
\be\label{nextineq}
\norm {F^N_{j}(t)-F(t)^{\otimes j}}_{\beta}\to 0\quad\hbox{ as }N\to\infty
\ee
uniformly as $\frac{|t|}T$ runs through compact subsets of $(-1,1)$ and for $ j\leq N$ satisfying
\rb{
\be\label{jjj}
1\leq j\leq\frac{\log{N}
}{\log{\norm{F^{in}}_{\beta'}}+2\log{2}+1/(e\log{(T/|t|)})}
\ee
}
\end{Thm}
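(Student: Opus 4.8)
The plan is to run a Cauchy--Kowalevsky type argument on the difference of the two BBGKY-like hierarchies, measured in the scale of analytic norms $\norm{\cdot}_\rho$. First I would introduce the error operators $E^N_j(t) := F^N_j(t) - F(t)^{\otimes j}$ and write down the equation they satisfy. Since $\{F(t)^{\otimes j}\}_j$ solves the infinite Hartree hierarchy \eqref{Hhier} exactly, while $\{F^N_j\}_j$ solves \eqref{NShier}, subtracting gives a closed (up to the truncation at level $N$) hierarchy for $E^N_j$ of the schematic form
\be\label{errhier}
i\hbar\partial_t E^N_j = [-\tfrac12\hbar^2\Delta_{(\bR^d)^j}, E^N_j] + \tfrac{N-j}{N}\cC_{j+1}E^N_{j+1} + R^N_j,
\ee
where the remainder $R^N_j$ collects the genuinely $O(1/N)$ terms: the self-interaction operator $\tfrac1N\cT_j F^N_j$, the defect $-\tfrac{j}{N}\cC_{j+1}F(t)^{\otimes(j+1)}$ coming from the prefactor mismatch, and the ``Hartree collision defect'' $\cC_{j+1}F(t)^{\otimes(j+1)} - (\text{Hartree term at level }j)$, which vanishes \emph{exactly} by the factorization property recalled just before Section \ref{BBGKY} (this is why it is crucial to subtract the exactly-factorized Hartree hierarchy rather than the Hartree equation itself). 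So $\norm{R^N_j}_\rho$ is bounded by $C/N$ times norms of $F(t)^{\otimes j}$ and $F^N_j$, which in turn are controlled by $\norm{F^{in}}_{\beta'}^{\,j}$ via the a priori propagation-of-analyticity estimate for the Wigner/Hartree flow (this a priori bound on $\norm{F^N_j(t)}_\rho$ and $\norm{F(t)^{\otimes j}}_\rho$ on $[0,T]$, and the mapping bound $\norm{\cC_{j+1}G}_\rho \le (\text{const})\, j\,(\rho'-\rho)^{-1}\int|h||\widehat\Phi(h)|e^{\rho'|h|}dh\,\norm{G}_{\rho'}$ for $\rho<\rho'$, are the two analytic lemmas I would establish first, presumably stated earlier in Part \ref{analytic}).

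Next I would pass to the Duhmel/integral form of \eqref{errhier}. The free von Neumann evolution $e^{it\hbar\Delta/2}\,\cdot\,e^{-it\hbar\Delta/2}$ acts on the symplectic Fourier side merely by a phase shift $\eta\mapsto\eta - t\hbar\xi$ composed with translation — crucially it is an \emph{isometry} for every $\norm{\cdot}_\rho$ and every $\hbar$, which is exactly what makes the estimate uniform in $\hbar$. Iterating Duhamel, $E^N_j(t)$ becomes a sum over $n\ge 0$ of $n$-fold time-ordered integrals of $\cC_{j+1}\cC_{j+2}\cdots\cC_{j+n}$ applied to the remainder at level $j+n$ (plus, at the top, a boundary term from truncation at $N$ that is handled by the same $1/N$ bookkeeping). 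Using a shrinking sequence of radii $\beta = \rho_0 < \rho_1 < \cdots < \beta'$ with $\rho_{k}-\rho_{k-1} \sim (\beta'-\beta)/n$, each application of $\cC$ costs a factor $\asymp (j+k)\,\Phi\text{-norm}\,\cdot\,n/(\beta'-\beta)$, the time-ordered simplex contributes $|t|^n/n!$, and the remainder at the end contributes $\tfrac1N\,C(j+n)\,\norm{F^{in}}_{\beta'}^{\,j+n}$. Summing the resulting series in $n$ is the step that fixes the time $T$: convergence requires $|t|$ times (potential norm) times (the rate at which $\norm{F^{in}}_{\beta'}^{\,j+n}$ and the combinatorial $(j+n)$-factors grow) to be less than $1$, which is precisely the content of definition \eqref{T}, and the geometric-type tail of the series produces the factor $\tfrac{|t|}{T}\cdot\frac{1}{1-|t|/T}$ and the constant $C(j,|t|/T)$ with the asymptotics displayed in the statement.

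Finally, the conclusions \eqref{nextineq} and \eqref{jjj}: once \eqref{maineq} holds, $\norm{E^N_j(t)}_\beta \le \tfrac{C(j,\tau)}{N}\tau\,\norm{F^{in}}_{\beta'}^{\,j}$ with $\tau=|t|/T$; plugging in the $j\to\infty$ asymptotics of $C(j,\tau)$, the right-hand side is $\le \exp\big(j(\log\norm{F^{in}}_{\beta'}+2\log 2) + \tfrac{(j+1)\log(1/\tau)+O(1)}{e(\log\tau)^2} - \log N + \log\tau + o(j)\big)$, and this tends to $0$ uniformly for $\tau$ in compact subsets of $[0,1)$ exactly when $j$ satisfies \eqref{jjj} — a direct computation of taking logarithms and isolating $j$.

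The main obstacle I anticipate is \emph{bookkeeping the constants sharply enough}. The qualitative Cauchy--Kowalevsky scheme is standard, but getting the explicit $C(j,\tau)$ with the stated asymptotic profile (in particular the $\exp\!\big(\tfrac{(j+1)\log(1/\tau)+3}{e(\log\tau)^2}\big)$ term, which has the fingerprint of optimizing $n \mapsto$ something like $\tau^n (j+n)^{\,?}$ over $n$ by Stirling) requires carrying the combinatorial factors $(j+1)(j+2)\cdots(j+n)$, the loss factors $(n/(\beta'-\beta))^n$, and the factorials $1/n!$ through the summation without slack, and then performing the $n$-sum by a saddle-point/ratio-test estimate rather than a crude geometric bound. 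Secondarily, verifying that the self-interaction term $\tfrac1N\cT_j F^N_j$ and the truncation boundary term are genuinely $O(1/N)$ in $\norm{\cdot}_\beta$ — not $O(j/N)$ in a way that would spoil \eqref{jjj} — needs the a priori analytic bound on $F^N_j$ to be uniform in $N$ and to have the right $j$-dependence, which is the one place the argument really uses the structure of the mean-field scaling $V_N = \tfrac1{2N}\sum\Phi(x_k-x_l)$ beyond the formal hierarchy manipulations.
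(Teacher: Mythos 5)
Your overall strategy is the paper's: work in the analytic norms $\norm{\cdot}_\rho$ on the symplectic Fourier side, expand in a Dyson/Duhamel series, pay for each collision operator with a loss of analyticity radius $\sim(\beta'-\beta)/n$ compensated by the $|t|^n/n!$ of the time simplex, identify the $O(1/N)$ sources (the $\cT_j/N$ term and the $(N-j)/N$ prefactor mismatch), and extract $C(j,\tau)$ by optimizing the combinatorial factors in $n$. The organization differs slightly: you subtract the hierarchies first and iterate Duhamel on the error $E^N_j$ with a single remainder insertion per string, whereas the paper keeps the two full Dyson expansions and interpolates through a third hierarchy $\dys^{HH_N}$ (pure $C^N$, no $\cT$), compares $\dys^N_n$ with $\dys^{HH_N}_n$ by expanding the product of $(T^N+C^N)$'s and collecting the strings containing at least one $T^N$ (a binomial sum giving $(1+\frac{j+n}N)^n-1$), compares $\dys_n$ with $\dys^{HH_N}_n$ through the explicit factor $1-\prod_l(1-\frac{j+l}N)$, and truncates both series at $n_0(N)=[\log N/\log(T/|t|)]+1$ to keep the constant $c_{n_0}=\exp((n_0-1)(j+n_0)/N)$ bounded. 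Your version would lean on the a priori bound of Theorem \ref{hier} to control the remainder $R^N_{j+n}$ at intermediate times; that is a legitimate and arguably cleaner variant, provided you budget the analyticity band for the a priori estimate as well.

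There is, however, one concrete error in your outline. The free evolution is \emph{not} an isometry for $\norm{\cdot}_\rho$. On the Wigner side the free quantum flow is exactly classical free transport (no $\hbar$ in the shift: $\widetilde{S_j(t)f_j}(\un\xi_j;\un\eta_j)=\widetilde{f_j}(\un\xi_j;\un\eta_j-t\un\xi_j)$), and the shear $\un\eta_j\mapsto\un\eta_j-t\un\xi_j$ destroys the weight $e^{\rho(|\xi|+|\eta|)}$: one only has $\norm{S_j(t)f_j}_\beta\le\norm{f_j}_{\beta'}$ under the constraint $(\beta'-\beta)/\beta'\ge|t|$ (estimate \eqref{St}). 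This loss is not a technicality — it is one of the two competing constraints in \eqref{betazero} that determine $\beta_0$ and hence $T$ in \eqref{T}; with a genuine isometry the time of validity would be fixed by the series convergence alone and would come out wrong. Relatedly, the uniformity in $\hbar$ does not come from the free flow (which is $\hbar$-independent in the Wigner representation) but from the collision kernels: in Lemma \ref{oufoufouf} the operators $T_j$, $C_{j+1}$ carry factors $2\sin(\tfrac\hbar2\eta\cdot h)/\hbar$, which are bounded by $|\eta|\,|h|$ uniformly in $\hbar$, and it is this bound (costing one more power of $(\beta'-\beta)^{-1}$ via $\sup s\,e^{-(\beta'-\beta)s}=1/e(\beta'-\beta)$) that makes \eqref{Cj}--\eqref{Tj} $\hbar$-free. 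Both points are repairable, but as written your propagation estimate and your justification of the $\hbar$-uniformity are incorrect.
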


\bigskip
\begin{Rmk}
The choice of $\log{\norm {F^{in}}_{\beta'}}+\log{2},$ in the definition of $T$ in Theorem \ref{main} and of $\log{\norm {F^{in}}_{\beta'}}+2\log{2}$ in \eqref{jjj} was made for sake of simplicity. $\norm{F^{in}}_{\beta'}+\log{2}$ could be replaced 
by any $\alpha>\norm{F^{in}}_{\beta'}$ in the definition of $T$ and  $\norm{F^{in}}_{\beta'}+\log{2}$ by any $\alpha'>\alpha$, as can be seen from the end of the proof of Theorem \ref{main} in Section \ref{endproof} --- see in particular both
formulas \eqref{grrr} and \eqref{estijgeneral}. The same remark holds true for Theorem \ref{mainclass} below.
\end{Rmk}

\bigskip
\begin{Rmk}[\textbf{Uniformity in the Planck constant}]
Observe that Theorem \ref{main} is valid for any value of $\hbar>0$. More precisely, the dependence in $\hbar$ of the right hand sides of \eqref{maineq} and \eqref{jjj} is contained in the initial data $F^{in}$ and its norm $\norm{F^{in}}_{\beta'}$
(both explicitly and  implicitly in the definition of $T$). Therefore, if we assume that $F^{in}$ has a semiclassical behavior, for instance if
\be\label{unifnorm}
\hbox{there exists }\beta'>0\hbox{ such that }\sup_{\hbar\in(0,1]}\norm{F^{in}}_{\beta'}<\infty\,,
\ee
the conclusions of Theorem \ref{main} hold uniformly in $\hbar\in(0,1]$.

More precisely, the estimate \eqref{maineq} implies that, under assumption \eqref{unifnorm}, the $N$-particle quantum dynamics converges to the Hartree dynamics uniformly in $\hbar\in(0,1]$, in $F^{in}$ running through bounded sets of 
density operators in the norm $\|\cdot\|_{\beta'}$, for some $\beta'>0$, and in $t$ running over compact subsets of $(-T^*,T^*)$, where 
$$
T^*:=~T[\sup\limits_{\hbar\in(0,1]}\norm{F^{in}}_{\beta'}+\log{2},\beta,\beta',\Phi]>0\,.
$$ 
This last observation follows from the fact that $T(\alpha,\beta,\beta',\Phi]$ is a decreasing function of $\alpha$.
\end{Rmk}

\bigskip
Examples of initial data satisfying (\ref{unifnorm})  are the T\"oplitz operators of the form 
\be\lb{Toplin}
F^{in}=\Op^T_\hbar[(2\pi\hbar)^df^{in}]=\int_{\bR^\times\bR^d}f^{in}(x;v)\,|x,v\ra\la x,v|\,dxdv
\ee
where $f^{in}$ runs through the set of probability densities on $\bR^d\times\bR^d$ satisfying the condition $\norm{f^{in}}_{\beta'}\le\hbox{Const.}$ for some $\beta'>0$. Indeed, the first formula in \eqref{HusiTopli} implies that
$$
\widetilde{W_\hbar[F^{in}]}(\xi;\eta)=e^{-\frac\hbar4(|\xi|^2+|\eta|^2)}\widetilde f^{in}(\xi;\eta)\,,
$$
so that
$$
\norm{F^{in}}_{\beta'}=\norm{W_\hbar[F^{in}]}_{\beta'}\leq\norm{f^{in}}_{\beta'}\,.
$$

Notice that Theorem \ref{main}, when restricted to, say, $\hbar=1$, reduces to the mean-field limit of the quantum $N$-body problem with $\hbar$ fixed.

\subsection{A derivation of the Vlasov equation.}


Let $f^{in}$ be a probability density on $\bR_x^d\times\bR_v^d$ such that $\norm{f}_{\beta'}<\infty$, and let $F^{in}=\Op^T_\hbar[(2\pi\hbar)^df^{in}]$ as in (\ref{Toplin}). Let $F$ be the solution of the Hartree equation with initial data 
$F^{in}$ and let $F^N$ be the solution of (\ref{Ns}) with initial data $(F^{in})^{\otimes N}$. Observe that
$$
\ba
\norm{F_j^N(t)-F(t)^{\otimes j}}_{\beta}=&\norm{W_\hbar[F_j^N(t)]-W_\hbar[F(t)^{\otimes j}]}_{\beta}
\\
=&\norm{W_\hbar[F^N(t)]_j-W_\hbar[F(t)]^{\otimes j}}_{\beta}\,.
\ea
$$
Applying (for instance) Theorem IV.2 in \cite{LionsPaul}, we see that
$$
W_\hbar[F^N(t)]\to f^N(t)\quad\hbox{ and }W_\hbar[F(t)]\to f(t)
$$
as $\hbar\to 0$, where $f(t)$ is the solution of the Vlasov equation with initial condition $f^{in}$, while $f^N(t)$ is the solution of the $N$-body Liouville equation with initial condition $(f^{in})^{\otimes N}$. Therefore, passing to the limit 
as $\hbar\to 0$ in \eqref{maineq}, we arrive at the following result, which bears on the derivation of the Vlasov equation from the classical $N$-body dynamics.

\begin{Thm}\label{mainclass}
Assume that 
$$
\norm{f^{in}}_{\beta'}<\infty\quad\hbox{ and }\quad\int_{\bR^d}|h||\widehat\Phi(h)|e^{\beta'|h|}dh<\infty\,,
$$
for some $\beta'>0$. Let $0<\beta<\beta'$, and let $T\equiv T[\log{\norm{f^{in}}_{\beta'}}+\log{2},\b,\b',\Phi]>0$ and $C(j,\tau)$ be given by \eqref{T} and by \eqref{DefCjt} respectively.  

Let $f(t)$ be the solution of the Vlasov equation with initial condition $f^{in}$, and let $f^N(t)$ be the solution of the $N$-body Liouville equation with initial condition $(f^{in})^{\otimes N}$. 

Then, for all $t$ such that $|t|<T$, all $j\in\bN^\star$ and all $N\ge j$, one has
\rb{
\be\label{maineqclass}\nonumber
\norm {f^N_{j}(t)-f(t)^{\otimes j}}_{\beta}\leq\frac{C(j,|t|/T)}N
\norm{f^{in}}_{\beta'}^j\,.
\ee
}
In particular, 
$$
\norm {f^N_{j}(t)-f(t)^{\otimes j}}_{\beta}\to 0\quad\hbox{ as }N\to\infty
$$
uniformly in $t$ over compact subsets of $(-T,T)$ and in $1\le j\le N$ satisfying
\rb{
$$
j\le\frac{\log N
}{\norm{f^{in}}_{\beta'}+2\log{2}+1/(e\log(T/|t|))}\,.
$$
}
\end{Thm}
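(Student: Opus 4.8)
The plan is to obtain Theorem \ref{mainclass} as the classical ($\hbar\to 0$) limit of Theorem \ref{main}, so essentially no independent argument is needed once Theorem \ref{main} is in hand. First I would set $F^{in}:=\Op^T_\hbar[(2\pi\hbar)^df^{in}]$ as in \eqref{Toplin}; by the computation just before the statement, $\norm{F^{in}}_{\beta'}\le\norm{f^{in}}_{\beta'}<\infty$, uniformly in $\hbar\in(0,1]$, and the hypothesis on $\widehat\Phi$ is literally the same in both theorems. Hence the hypotheses of Theorem \ref{main} hold with this $F^{in}$, uniformly in $\hbar$, and the time $T\equiv T[\log\norm{f^{in}}_{\beta'}+\log 2,\beta,\beta',\Phi]$ (which dominates the $\hbar$-dependent $T[\log\norm{F^{in}}_{\beta'}+\log 2,\dots]$ since $T$ decreases in its first argument) works for all small $\hbar$. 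Then \eqref{maineq} gives, for $|t|<T$ and $N\ge j\ge 1$,
\be\nonumber
\norm{W_\hbar[F^N(t)]_j-W_\hbar[F(t)]^{\otimes j}}_{\beta}=\norm{F^N_j(t)-F(t)^{\otimes j}}_{\beta}\leq\frac{C(j,|t|/T)}N\frac{|t|}T\norm{F^{in}}_{\beta'}^j\le\frac{C(j,|t|/T)}N\frac{|t|}T\norm{f^{in}}_{\beta'}^j\,,
\ee
with the right-hand side independent of $\hbar$.

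Next I would pass to the limit $\hbar\to 0$ on the left-hand side. By Theorem IV.2 in \cite{LionsPaul} (the quantitative semiclassical limit for the von Neumann/Schrödinger and Hartree dynamics under Toplitz initial data), $W_\hbar[F^N(t)]\to f^N(t)$ and $W_\hbar[F(t)]\to f(t)$ as $\hbar\to 0$, where $f^N$ solves the $N$-body Liouville equation with data $(f^{in})^{\otimes N}$ and $f$ solves the Vlasov equation with data $f^{in}$; taking $j$-particle marginals commutes with this convergence, so $W_\hbar[F^N(t)]_j\to f^N_j(t)$ and $W_\hbar[F(t)]^{\otimes j}\to f(t)^{\otimes j}$. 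The norm $\norm{\cdot}_\beta$ is defined through the sup over $(\xi,\eta)$ of the symplectic Fourier transform weighted by $e^{\beta(|\xi|+|\eta|)}$; since pointwise convergence of the Wigner functions in $\cS'$ forces pointwise convergence of their (symplectic) Fourier transforms, and $e^{\beta(|\xi|+|\eta|)}$ is a fixed weight, $\norm{\cdot}_\beta$ is lower semicontinuous along this convergence. Therefore
\be\nonumber
\norm{f^N_j(t)-f(t)^{\otimes j}}_{\beta}\le\liminf_{\hbar\to 0}\norm{W_\hbar[F^N(t)]_j-W_\hbar[F(t)]^{\otimes j}}_{\beta}\le\frac{C(j,|t|/T)}N\frac{|t|}T\norm{f^{in}}_{\beta'}^j\,,
\ee
which is exactly \eqref{maineqclass}. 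The final convergence statement, with the stated range of $j$, then follows verbatim from the corresponding deduction in Theorem \ref{main}, reading \eqref{jjj} with $\norm{F^{in}}_{\beta'}$ replaced by $\norm{f^{in}}_{\beta'}$ (a smaller quantity, so the admissible range only widens), exactly as in the proof of \eqref{nextineq} from \eqref{maineq}.

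The only genuinely delicate point is the passage to the limit in the analytic norm: one must be sure that the semiclassical convergence of the Wigner functions holds in a topology strong enough that lower semicontinuity of $\norm{\cdot}_\beta$ applies, and that this is legitimate simultaneously for every $N$ and $j$ (i.e. that no $\hbar$-uniformity in $N$ is silently required on the left-hand side — it is not, since the bound on the right is already $\hbar$-independent). This is why the argument is phrased as a $\liminf$ rather than a limit of equalities: we do not need $\norm{W_\hbar[F(t)]^{\otimes j}}_{\beta}$ to converge, only that the defining supremum does not jump down in the limit, which is automatic because the Fourier transforms converge pointwise and the exponential weight is fixed. Everything else — the form of $T$, the constant $C(j,\tau)$ and its asymptotics, and the range \eqref{jjj} — is inherited unchanged from Theorem \ref{main}, so no new estimates are produced here.
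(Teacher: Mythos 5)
Your proposal is correct and follows the same route as the paper: quantize $f^{in}$ by the T\"oplitz operator \eqref{Toplin}, apply Theorem \ref{main} with the $\hbar$-uniform bound $\norm{F^{in}}_{\beta'}\le\norm{f^{in}}_{\beta'}$, and pass to the semiclassical limit of the Wigner functions via Theorem IV.2 of \cite{LionsPaul}; your lower-semicontinuity argument for $\norm{\cdot}_\beta$ merely makes explicit a step the paper leaves implicit. One slip of wording: since $T$ is decreasing in its first argument and $\norm{F^{in}}_{\beta'}\le\norm{f^{in}}_{\beta'}$, the classical $T[\log{\norm{f^{in}}_{\beta'}}+\log{2},\beta,\beta',\Phi]$ is \emph{dominated by} (not ``dominates'') the $\hbar$-dependent one, which is exactly the inequality you need so that $|t|<T$ places you inside the range of validity of Theorem \ref{main} for every $\hbar$.
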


\smallskip
Of course, there exist other derivations of the Vlasov equation from the classical $N$-body problem (see \cite{NeunWick,BraunHepp,Dobru}, or section 3 in \cite{GMouPaul}) --- see also \cite{GP} for a derivation of the Vlasov equation 
from the quantum $N$-body problem, by a joint mean-field and semiclassical limit. These derivations put less requirements on the interaction potential --- but they do not use the formalism of hierarchies. It is interesting to observe that using
\textit{exclusively} the formalism of hierarchies for this derivation seems to require more regularity on the interaction potential than the approach based on empirical measures used in \cite{NeunWick,BraunHepp,Dobru}. The approach in 
section 3 of \cite{GMouPaul} uses neither empirical measures nor hierarchies, but requires the same regularity on the interaction potential as the approach based on the empirical measure, i.e. $C^{1,1}$ potentials. Notice however Spohn's
interesting result \cite{Spohn} on the uniqueness for the Vlasov hierarchy (see also the appendix of \cite{GMouRicci}) which also requires $C^{1,1}$ regularity on the interaction potential. 


\part{Results with analytic data}\label{analytic}


\section{Bounds on the solutions of the hierarchies}\label{convhier}


Integrating the Wigner equation \eqref{Nw}, over the $N-j$ last variables, with 
$$
V_N(\un x_N)=\frac1{2N}\sum\limits_{1\leq l\neq r\leq N}\Phi(x_l-x_r)\,,
$$
leads to the following finite hierarchy, henceforth referred to as the ``Wigner hierarchy''. It is  of course equivalent to \eqref{NShier},
\be\label{wh}
\partial_tf_j^N+\sum_{i=1}^j v_i\cdot\nabla_{x_i}f_j^N=\frac1NT_jf_j^N+\frac{N-j}NC_{j+1}f_{j+1}^N,\  
\ee
where 
$$ 
\left\{
\ba
{}&f_j^N(x_1,\dots,{\un x}_j; {\un v}_1,\dots ,{\un v}_j):=\int f^N(\un x_N;\un v_N)dx_{j+1}d{v}_{j+1}\dots dx_Nd{v}_N\ \ \ \hbox{ for } j\leq N,
\\
&f^N_j:=0\ \ \ \hbox{ for }j>N.
\ea\right.
$$
In view of the definition of the norm $\norm{\cdot}_\beta$,  it is more convenient for our purpose to express $T_j$ and $C_{j+1}$ through their action on the Fourier transforms of $f_j^N$ and $f_{j+1}^N$ as defined by \eqref{tft}: this is achieved
by Lemma \ref{oufoufouf} below.

Passing to the limit as $N\to\infty$, one arrives at the infinite hierarchy 
$$
\partial_tf_j+\sum_{i=1}^j v_i\cdot\nabla_{x_i}f_j=C_{j+1}f_{j+1},\ \ \ j\geq 1,
$$
henceforth referred to as the ``Hartree hierarchy''. This is a formal statement so far, and one of the goals in our study is to turn this formal statement into a precise convergence statement with an estimate of the convergence rate. 

From now on, we regard both the Wigner and the Hartree hierarchies as governing the evolution of infinite sequences of Wigner distributions. It is therefore natural to seek controls on these hierarchies in appropriate functional spaces
of sequences $\un f=\{f_j\}_{j\ge 1}$. These functional spaces are metrized by the following family of norms:
\be\nonumber
\norm {\un f}_{\alpha,\beta}:=\sum_{j=1}^\infty e^{-\alpha j}\norm {f_j}_\beta,\quad \alpha,\beta>0\,.
\ee

For $\beta>0$ we set
\be\label{cphi}\nonumber
C_\Phi^\beta(t):=\int_{\bR^d}|h||\widehat\Phi(h)|e^{2\beta|h|(1+|t|)}{dh}\,,\qquad C_\Phi^0:=\int_{\bR^d}|h||\widehat\Phi(h)|{dh}\,.
\ee

\begin{Thm}\label{hier}
Assume that $\norm{f^{in}}_{\beta'}<\infty$ for some $\beta'>0$. Let $\un f^N(t)$ be the solution of the $N$-body Wigner hierarchy with initial data $\un f^{in}=\{f^{in}_j\}_{j\ge 1}$ defined by
$$
\ba
{}&f^{in}_j=(f^{in})^{\otimes j},\quad &j\leq N\,,
\\
&f^{in}_j=0, &j>N\,.
\ea
$$
Then for each $\alpha>\log{\norm {f^{in}}_{\rb{\beta'}}}$ and each $\beta$ such that $0<\beta<\beta'$, there exists $T\equiv T[\alpha,\beta,\beta',\Phi]>0$ given by \eqref{T} below such that, for each $t\in[0,T)$
\be\label{eqmainhier2}
\norm{\un f^N(t)}_{\alpha,\beta}\leq\frac{\norm {\un f^{in}}_{\alpha,\beta'}}{1-\frac{|t|}T}.
\ee
\end{Thm}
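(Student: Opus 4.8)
The plan is to treat the Wigner hierarchy \eqref{wh} as an abstract Cauchy--Kowalevski problem in the scale of Banach spaces of sequences metrized by $\norm{\cdot}_{\alpha,\beta}$, and to close an estimate by a fixed-point / majorant-series argument. First I would rewrite \eqref{wh} in mild (Duhamel) form with respect to the free transport group: if $S_j(t)$ denotes the transport semigroup $(S_j(t)g)(\un x_j;\un v_j)=g(\un x_j-t\un v_j;\un v_j)$ on $\fH_j$, then
\be\nonumber
f^N_j(t)=S_j(t)f^{in}_j+\int_0^t S_j(t-s)\Bigl(\tfrac1N T_jf^N_j(s)+\tfrac{N-j}N C_{j+1}f^N_{j+1}(s)\Bigr)\,ds\,.
\ee
The crucial analytic input is the behaviour of the norms $\norm{\cdot}_\beta$ under these three operations. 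Transport shifts the Fourier variable $\xi\mapsto\xi+t\eta$, which costs a factor $e^{2\beta|t|\,|\eta|}$, and this is exactly why the definition of $C_\Phi^\beta(t)$ carries the time-dependent exponent $e^{2\beta|h|(1+|t|)}$; for the collision and ``truncation'' operators $C_{j+1}$ and $T_j$ I would invoke Lemma \ref{oufoufouf} (the Fourier-side description of $T_j$ and $C_{j+1}$ announced in the text) to get, schematically, bounds of the form $\norm{S_j(t-s)C_{j+1}g}_\beta\lesssim (j) C_\Phi^\beta(t)\norm{g}_{\beta'}$ with a loss from $\beta'$ down to any $\beta<\beta'$ — the standard Cauchy--Kowalevski loss-of-derivative mechanism — and similarly $\norm{S_j(t-s)T_jg}_\beta\lesssim j^2C_\Phi^\beta(t)\norm{g}_{\beta}$. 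Summing against the weight $e^{-\alpha j}$, the factor $j$ from $C_{j+1}$ is absorbed because $\sum_j je^{-\alpha j}<\infty$, and the shift $j\mapsto j+1$ in the index produces the key factor $e^{\alpha}$ (this is where $\alpha>\log\norm{f^{in}}_{\beta'}$ enters, controlling the initial datum $\norm{\un f^{in}}_{\alpha,\beta'}=\sum_j e^{-\alpha j}\norm{f^{in}}_{\beta'}^j=\norm{f^{in}}_{\beta'}/(e^\alpha-\norm{f^{in}}_{\beta'})<\infty$); the $1/N$ in front of $T_j$ kills its $j^2$ growth for $j\le N$.

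Next I would set up the iteration scheme $\un f^{N,(0)}(t):=\{S_j(t)f^{in}_j\}_j$ and $\un f^{N,(n+1)}$ defined by the Duhamel formula applied to $\un f^{N,(n)}$, and prove by induction the bound
\be\nonumber
\norm{\un f^{N,(n)}(t)}_{\alpha,\beta}\le\norm{\un f^{in}}_{\alpha,\beta'}\sum_{k=0}^n\Bigl(\tfrac{|t|}T\Bigr)^k
\ee
for $|t|<T$, where $T\equiv T[\alpha,\beta,\beta',\Phi]$ is chosen precisely so that the constant aggregating $e^\alpha$, $\sup_{|s|\le|t|}C_\Phi^\beta(s)$, $\beta$, $\beta'$ and the combinatorial sums equals $1/T$ — this is the definition \eqref{T} referred to in the statement. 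The geometric series then gives \eqref{eqmainhier2} in the limit, and the same contraction estimate for the difference of two successive iterates shows the scheme converges in $\norm{\cdot}_{\alpha,\beta}$ to a solution, which one identifies with $\un f^N(t)$ by uniqueness (itself a consequence of the same contraction applied to the difference of two solutions).

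The main obstacle, and the place where care is genuinely needed, is the derivation of the operator bounds for $S_j(t-s)C_{j+1}$ in the $\norm{\cdot}_\beta$-norm: one must show that the convolution-type structure of $C_{j+1}$ on the symplectic-Fourier side, combined with the transport shift, produces exactly a factor $C_\Phi^\beta(t)$ times a loss $\beta\to\beta'$ that can be made as small as one wishes by taking $\beta$ strictly below $\beta'$, without any hidden dependence on $\hbar$. The $\hbar$ in \eqref{wh} appears only through the finite-difference quotient $\bigl(f(v+\tfrac12\hbar z)-f(v-\tfrac12\hbar z)\bigr)/(i\hbar)$; on the Fourier side this becomes $\eta\mapsto\tfrac{2}{\hbar}\sin(\tfrac{\hbar}{2}\eta\cdot(\cdots))$-type multipliers that are bounded uniformly in $\hbar$ by the corresponding linear symbol, so the exponential weight $e^{\beta(|\xi|+|\eta|)}$ controls them with $\hbar$-independent constants — this is the mechanism behind the uniformity claim, and I would make sure it is visible in the estimate for $C_{j+1}$. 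The remaining steps (summation of the combinatorial factors, the induction, passing to the limit) are routine once these norm inequalities are in hand.
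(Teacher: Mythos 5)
Your overall architecture (Duhamel form, analytic norms $\norm{\cdot}_{\alpha,\beta}$, Fourier-side bounds on $T_j$ and $C_{j+1}$ that are uniform in $\hbar$ because $\frac2\hbar|\sin(\frac\hbar2\eta\cdot h)|\le|\eta||h|$, identification of the limit with $\un f^N$ by uniqueness) matches the paper's, and your computation of $\norm{\un f^{in}}_{\alpha,\beta'}$ and the role of $\alpha>\log\norm{f^{in}}_{\beta'}$ are correct. But the central analytic step — summing the series — has a genuine gap as you state it. Your induction hypothesis $\norm{\un f^{N,(n)}(t)}_{\alpha,\beta}\le\norm{\un f^{in}}_{\alpha,\beta'}\sum_{k=0}^n(|t|/T)^k$ is formulated with a \emph{fixed} target index $\beta$, yet each application of the Duhamel integrand requires the previous iterate in a \emph{strictly larger} index: the multipliers in $C_{j+1}$ and in $T_j$ grow linearly in $\eta$ (so your bound $\norm{S_j(-t)T_jS_j(t)g}_\beta\lesssim j^2C_\Phi^\beta(t)\norm{g}_\beta$ with the same $\beta$ on both sides is false — the factor $1/(e(\beta'-\beta))$ comes precisely from absorbing $\sum_i(|\xi_i|+|\eta_i|)$ into $e^{(\beta-\beta')(|\xi|+|\eta|)}$). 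Consequently the inductive step from $n$ to $n+1$ cannot be closed: the hypothesis controls $\un f^{N,(n)}(s)$ only in the $\beta$-norm, while the estimate of $\un f^{N,(n+1)}$ needs it in some $\beta''>\beta$, and each further iteration consumes another slice of the band $[\beta,\beta']$.

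The paper's resolution, which is missing from your proposal, is to bound each $n$-th order Dyson term individually by splitting the band into $n$ equal pieces $\beta_k=\beta_0+k(\beta'-\beta_0)/n$, losing $(\beta'-\beta_0)/n$ at each of the $n$ applications of $S(-t_k)(T^N+C^N)S(t_k)$; this produces a factor $\bigl(\tfrac{2(1+|t|)C_\Phi^{\beta'}(t)e^\alpha}{e(\beta'-\beta_0)}n\bigr)^n$, which combined with the simplex volume $|t|^n/n!$ and $n!\ge n^ne^{-n}$ yields a geometric series in $2(1+|t|)|t|C_\Phi^{\beta'}(t)e^\alpha/(\beta'-\beta_0)$. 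A residual margin $\beta_0>\beta$ is reserved to absorb the free transport $S(t)$ over the whole time interval via $|t|\le(\beta_0-\beta)/\beta_0$, and $T$ is defined by \eqref{betazero}--\eqref{T} so that the geometric ratio equals $|t|/T$. Without this $n$-dependent subdivision (or an equivalent Nirenberg--Nishida-type weighted formulation tracking the dependence on $\beta'-\beta$ uniformly), the estimate \eqref{eqmainhier2} is not established.
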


\begin{proof} Our proof of Theorem \ref{hier} is based on the estimates in the next lemma, whose proof is deferred until the end of the present section.

\begin{Lem}[Propagation estimates]\label{propesti}
Let $S_j(t)$ denote the group generated by the free transport operator in the $j$ first phase-space variables
$$
-\sum_{k=1}^j\xi_i\cdot\nabla_{x_i}\,.
$$
In other words,
$$
S_j(t)f^0_j:\, (x_1,\ldots,{\un x}_j;\xi_1,\ldots,\xi_j)\mapsto f^0_j(x_1-t\xi_1,\ldots,{\un x}_j-t\xi_j;\xi_1,\ldots,\xi_j)
$$
is the value at time $t$ of the solution of the Cauchy problem
$$
\d_tf_j+\sum_{k=1}^j\xi_i\cdot\nabla_{x_i}f_j=0\,,\qquad f_j\rstr_{t=0}=f^0_j\,.
$$
Then, 
\be\label{St}
\norm{S_j(t) f_j}_\beta\leq\norm{f_j}_{\beta'}\quad\hbox{ provided that }\frac{\beta'-\beta}{\beta'}\geq |t|\,,
\ee
while
\be\label{Cj}
\norm{S_j(-t)C_{j+1}S_{j+1}(t)f_{j+1}}_\beta\leq\frac{(1+|t|)C_\Phi^0}{e(\beta'-\beta)}\norm{f_{j+1}}_{\beta'}\,,
\ee
and
\be\label{Tj}
\norm{S_j(-t)T_jS_j(t)f_{j}}_\beta\leq\frac{j(1+|t|)C_\Phi^{\beta'}(t)}{e(\beta'-\beta)}\norm{f_j}_{\beta'}\,,
\ee
whenever $\beta'>\beta$.
\end{Lem}

Define
\be\label{TNCN}
T^N\un f:=\left\{\frac{T_j}Nf_j\right\}_{j\ge 1}\quad\hbox{ and }\ \ \ C^N\un f:=\left\{\frac{N-j}N{C_{j+1}}f_{j+1}\right\}_{j\ge 1}\,,
\ee
so that \eqref{wh} reads 
\be\label{whun}\nonumber
\partial_t\un f^N+\sum_{i=1}^Nv_i\cdot\nabla_{x_i}\un f^N=(T^N+C^N)\un f^N,
\ee 
and let 
$$
S(t)\un f^N:=\{S_j(t)f^N_j\}_{j\ge 1}\,.
$$

\smallskip
As a straightforward consequence of Lemma \ref{propesti}, we arrive at the following estimates.

\begin{Cor}\label{cor}
Under the same assumptions as in Lemma \ref{propesti} and for $\beta'>\beta$, one has
\be\label{estC}
\norm{S(-t)C^NS(t)\un f}_{\alpha,\beta}\leq\frac{(1+|t|)C_\Phi^0e^\alpha}{e(\beta'-\beta)}\norm{\un f}_{\alpha,\beta'}\,,
\ee
\be\label{estT}
\norm{(S(-t)T^NS(t) \underline f)_j}_{\beta}\leq\frac {j} Ne^{\alpha j}\frac{(1+|t|)C_\Phi^{\beta'}(t)}{e(\beta'-\beta)}\norm{\un f}_{\alpha,\beta'}\,,
\ee
and 
\be\label{Tab}
\norm{S(-t)T^NS(t)\un f}_{\alpha,\beta}\leq\frac{(1+|t|)C_\Phi^{\beta'}(t)}{e(\beta'-\beta)}\norm{\un f}_{\alpha,\beta'}\,. 
\ee
\end{Cor}

\smallskip
We shall compute the norm of the solution of the Wigner hierarchy expressed by a Dyson expansion. 

For $0\leq t_1\leq t_{2}\dots\leq t_n\leq t$, consider the string of distribution functions indexed by $n\ge 0$, and defined as follows: for $n>0$,
\be\label{dysonTC}
\dys^N_n (t,t_1,\dots,t_n):=S(t)S(-t_n)(T^N+C^N)S(t_n)\ldots S(-t_1)(T^N+C^N)S(t_1)\un f^{in}\,,
\ee
while
$$
\bcb\dys^N_0\ec (t):=S(t)\un f^{in}\,,
$$
where $\un f^{in}=\{f^{in}_j\}_{j=1\ldots\infty}$ is the initial data. 

For each $K\geq1$, set
$$
\un\dys^{N,K}(t):=\sum\limits_{n=0}^K\int_0^tdt_n\int_0^{t_n}dt_{n-1}\dots\int_0^{t_{2}}dt_1\dys^N_n(t,t_1,\dots,t_n)\,.
$$
We immediately see that
$$
\partial_t\un {\dys}^{N,K}+\sum_{i=1}^Nv_i\cdot\nabla_{x_i}\un {\dys}^{N,K}=(T^N+C^N)\un {\dys}^{N,K-1}, \qquad\un {\dys}^{N,K}\rstr_{t=0}=\un f^{in}\,.
$$
The Dyson expansion is
$$
\lim\limits_{K\to\infty}\un {\dys}^{N,K}(t)=:\un {\dys}^N(t)\,,
$$
and, if 
$$
\lim\limits_{K\to\infty}(T^N+C^N)\un {\dys}^{N,K-1}=(T^N+C^N)\un {\dys}^N\,,
$$
the Dyson expansion gives the  solution $\un {f}^N$ of the $N$-body Wigner hierarchy with initial data $\un f^{in}$. The uniqueness of the solution of the $N$-Wigner hierarchy is obvious for each finite $N$, since that hierarchy is equivalent
to the $N$-body quantum problem. (Indeed, the $N$-Wigner hierarchy is derived from the quantum $N$-body problem, and conversely, the $N$-th equation in the $N$-Wigner hierarchy is precisely the $N$-body Wigner equation.)

For $\beta<\beta_0<\beta'$, let us  define 
$$
\beta_k:=\beta_0+k\frac{\beta'-\beta_0}n\,\quad k=0,\dots,n\,,
$$ 
so that 
$$
\beta_k\rstr_{k=0}=\beta_0\,,\quad\beta_n=\beta'\,,\quad\hbox{ and }\beta_{k+1}-\beta_k=\frac{\beta'-\beta_0}n\,,\quad 0\le k<n\,.
$$
Set
$$
\Si_N(t_k):=S(-t_k)(T^N+C^N)S(t_k)\,.
$$

Observe that $C^0_\Phi\leq C^{\beta}_\Phi(t)$ for all $t\in\bR$ and all $\beta>0$, and that $t\mapsto C^{\beta}_\Phi(t)$ is an increasing function of $|t|$ for each $\b>0$. Applying Corollary \ref{cor} \eqref{estC}-\eqref{Tab} shows that,
for all $|t_k|\leq t$ and $0<\g<\g'$, one has
\be\lb{EstiSigman}
\norm{\Si_N(t_k)\un f}_{\alpha,\g}\leq\frac{2(1\!+\!|t_k|)C_\Phi^{\g'}(t_k)e^\alpha}{e(\g'-\g)}\norm{\un f}_{\alpha,\g'}\leq\frac{2(1\!+\!|t|)C_\Phi^{\g'}(t)e^\alpha}{e(\g'-\g)}\norm{\un f}_{\alpha,\g'}\,.
\ee
Therefore, applying Lemma \ref{propesti} \eqref{St} with $|t|\leq \frac{\beta_0-\beta}{\beta_0}$ shows that
\be\label{cxx}
\ba
\norm{{\dys}_n(t,t_1,\dots,t_n)}_{\alpha,\beta}=&\norm{S(t)\Si_N(t_n)\Si_N(t_{n-1})\ldots\Si_N(t_1)\un f^{in}}_{\alpha,\beta}
\\
\le&\norm{\Si_N(t_n)\Si_N(t_{n-1})\ldots\Si_N(t_1)\un f^{in}}_{\alpha,\beta_0}
\\
\le&\frac{2(1\!+\!|t|)C_\Phi^{\beta_1}(t)e^\alpha}{e(\beta_1-\beta_0)}\norm{\Si_N(t_{n-1})\ldots\Si_N(t_1)\un f^{in}}_{\alpha,\beta_1}
\\
=&\frac{2(1\!+\!|t|)C_\Phi^{\beta'}(t)e^\alpha }{e(\beta'-\beta_0)}n\norm{\Si_N(t_{n-1})\ldots\Si_N(t_1)\un f^{in}}_{\alpha,\beta_1}
\\
\le&\left(\frac{2(1+|t|)C_\Phi^{\beta'}(t)e^\alpha }{e(\beta'-\beta_0)}n\right)^2\norm{\Si_N(t_{n-2})\ldots\Si_N(t_1)\un f^{in}}_{\alpha,\beta_2}
\\
\cdot&
\\
\cdot&
\\
\cdot&
\\
\le&\left(\frac{2(1+|t|)C_\Phi^{\beta'}(t)e^\alpha }{e(\beta'-\beta_0)}n\right)^n\norm{\un f^{in}}_{\alpha,\beta'}\,.
\ea
\ee

Hence
\be\lb{estigeo}
\ba
\sum\limits_{n=0}^K\left\|\int_0^t dt_1\int_0^{t_1}dt_2\dots\int_0^{t_{n-1}}dt_n{\dys}_n(t,t_1,\dots,t_n)\right\|_{\alpha,\beta}
\\
\le\sum\limits_{n=0}^K\left(\frac{2(1+|t|)C_\Phi^{\beta'}(t)e^\alpha }{e(\beta'-\beta_0)}n\right)^n\frac{|t|^n}{n!}\norm{\un f^{in}}_{\alpha,\beta'}
\\
=\sum\limits_{n=0}^K\left(\frac{2(1+|t|)|t|C_\Phi^{\beta'}(t)e^\alpha }{(\beta'-\beta_0)}\right)^n\frac{n^ne^{-n}}{n!}\norm{\un f^{in}}_{\alpha,\beta'}
\\
\le\sum_{n=0}^K\left(\frac{2(1+|t|)|t|C_\Phi^{\beta'}(t)e^\alpha }{(\beta'-\beta_0)}\right)^n\norm{\un f^{in}}_{\alpha,\beta'}
\ea
\ee
where \eqref{estigeo} follows from the fact\footnote{Since $\log$ is an increasing function, one has
$$
\log{n!}=\sum\limits_{j=2}^n\log{j}\geq\int_{1}^n\log(x)dx=\big[x\log{x}-x\big]_1^n=n\log{n}-n+1>\log(n^ne^{-n})\,.
$$}
that $n!\geq   n^ne^{-n}$, for each positive integer $n$. Hence the Dyson expansion converges uniformly in $N$ as $K\to\infty$, provided that
$$
|t|\leq\frac{\beta_0-\beta}{\beta_0}\quad\hbox{ and }\quad\frac{2(1+|t|)|t|C_\Phi^{\beta'}(t)e^\alpha }{(\beta'-\beta_0)}<1\,.
$$
By construction, the function $t\mapsto C_\Phi^{\beta'}(t)$ is continuous and increasing on $\bR_+^*$, so that there exists a unique $\beta_0\equiv\beta_0[\alpha,\beta,\beta',\Phi]$ satisfying
\be\label{betazero}
\beta<\beta_0<\beta'\quad\hbox{ and }\quad 2(1+\tfrac{\beta_0-\beta}{\beta_0})\tfrac{\beta_0-\beta}{\beta_0}C_\Phi^{\beta'}(\tfrac{\beta_0-\beta}{\beta_0})e^\alpha=\beta'-\beta_0\,.
\ee
Defining
\be\lb{T}
T[\alpha,\beta,\beta',\Phi]:=\frac{\beta_0[\alpha,\beta,\beta',\Phi]-\beta}{\beta_0[\alpha,\beta,\beta',\Phi]},
\ee
we see that
$$
|t|<T[\alpha,\beta,\beta',\Phi]\Rightarrow |t|\leq\frac{\beta_0-\beta}{\beta}\quad\hbox{ and }\quad\frac{2(1+|t|)|t|C_\Phi^{\beta'}(t)e^\alpha }{(\beta'-\beta_0)}<1.
$$
Hence the Dyson expansion converges uniformly in $N$ and $|t|\le\tau$ as $K\to\infty$ if $0<\tau<T[\alpha,\beta,\beta',\Phi]$.

Observe that the map $s\mapsto(1+s)sC_\Phi^{\beta'}(s)$ is increasing on $(0,+\infty)$ for each $\beta'>\beta$, so that $\a\mapsto T[\alpha,\beta,\beta',\Phi]$ is decreasing on $(0,+\infty)$.

Applying Corollary \ref{cor} with $t=0$, and using the geometric series estimates in \eqref{estigeo} shows that the term $(T^N+C^N)\un g^{N,K-1}$ converges to $(T^N+C^N)\un g^N$ as $K\to\infty$.

Therefore, by uniqueness of the solution of the Cauchy problem for the $N$-Wigner hierarchy, $\un g^N=\un f^N$, so that $\un {\dys}^{N,K}$ converges to $\un f^N$ as $K\to\infty$. In other words, $\un f^N$ is given by the Dyson expansion. 
In particular, the estimate \eqref{eqmainhier2} follows from \eqref{estigeo}, after noticing that 
\be\label{tbeta}
\frac{2(1+|t|)|t|C_\Phi^{\beta'}(t)e^\alpha }{(\beta'-\beta_0)}\leq \frac{|t|}T\times\frac{2(1+T)TC_\Phi^{\beta'}(T)e^\alpha }{(\beta'-\beta_0)}\leq \frac{|t|}T\,,
\ee
since $\frac{2(1+T)TC_\Phi^{\beta'}(T)e^\alpha }{(\beta'-\beta_0)}=1$ by \eqref{T} and \eqref{betazero}.
\end{proof}

\smallskip
We conclude this section with the proof of Lemma \ref{propesti}.

\begin{proof}[Proof of Lemma \ref{propesti}]
\bcbb

We shall write \eqref{Nw}  in Fourier variables, henceforth denoted $(\un\xi_N;\un\eta_N)$.  More generally, we recall the notation
$$
(\un\xi_j;\un\eta_j):=(\xi_1,\dots,\xi_j;\eta_1,\dots,\eta_j)\in\bR^{2jd}
$$
In other words, $\un\xi_j$ is the Fourier variable corresponding to ${\un x}_j$ while $\un\eta_j$ is the Fourier variable corresponding to $\un v_j$. For $h\in\bR^d$ and $ j=1,\dots,N$, we set
\be\label{defh}
\un h^r_j:=(0_{r-1},h,0_{j-r})\,,
\ee
where 
$$
0_{r-1}:=(0,\ldots,0)\in\bR^{(r-1)d}\,.
$$
 
\begin{Lem}\label{oufoufouf}
The $N$-Schr\"odinger hierarchy \eqref{NShier} is equivalent to the $N$-Wigner hierarchy
$$
\partial_tf_j^N+\sum_{i=1}^j v_i\cdot\nabla_{x_i}f_j^N=\frac1NT_jf_j^N+\frac{N-j}NC_{j+1}f_{j+1}^N\,,
$$
where $f^N_j$ is the Wigner function of $D^N_j$, while $T_j$ and $C_{j+1}$ are defined by the formulas
$$
\left\{
\ba
{}&\widetilde{T_jf_j^N}(t,\un\xi_j;\un\eta_j)=\tfrac12\sum\limits_{l\neq r=1}^j\int_{\bR^d}{dh}\widehat\Phi(h)\frac{2\sin{\frac{\hbar}2(\eta_r-\eta_l)\cdot h}}\hbar\widetilde{f^N_j}(t,\un\xi_j+\un h^r_j-\un h^l_j;\un\eta_j)\,,
\\
\\
&\widetilde{C_{j+1}f_{j+1}^N}(t,\un\xi_j;\un\eta_j)=\sum\limits_{r=1}^j\int_{\bR^d}{dh}\widehat\Phi(h)\frac{2\sin(\frac{\hbar}{2}\eta_r\!\cdot\!h)}\hbar\widetilde{f_{j+1}^N}(t,\un\xi_j+\un h^r_j,-h;\un\eta_j,0)\,,
\ea
\right.
$$
where $\widetilde\cdot$ is the Fourier transform defined by \eqref{tft} with $n=jd\mbox{ or }(j+1)d$.
\end{Lem}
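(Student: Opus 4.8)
The plan is to pass from the $N$-Schr\"odinger hierarchy \eqref{NShier} to the $N$-Wigner hierarchy by applying the Wigner transform $W_\hbar$ to each of its terms divided by $i\hbar$, and then to rewrite the two potential operators so obtained in terms of the symplectic Fourier transform \eqref{tft}. Since $W_\hbar$ is a linear bijection (the inversion of \eqref{WTransf} is recalled in Section \ref{BBGKY}, and $W_\hbar$ also commutes with the passage to marginals), the asserted equivalence of the two hierarchies is automatic once this conjugation has been computed term by term. The transport commutator $\tfrac1{i\hbar}[-\tfrac12\hbar^2\Delta_{(\bR^d)^j},\cdot]$ produces the free streaming operator $-\sum_{i\le j}v_i\cdot\nabla_{x_i}$ by exactly the computation already used around \eqref{Nw}, so the whole content of the lemma is the identification of $T_j:=W_\hbar\,\tfrac1{i\hbar}\cT_j\,W_\hbar^{-1}$ and of $C_{j+1}:=(\,\cdot\,)_j\,W_\hbar\,\tfrac1{i\hbar}\cC_{j+1}\,W_\hbar^{-1}$ in Fourier variables.

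First I would compute, for a single pair $l\ne r$ and a trace-class operator $D$ on $L^2((\bR^d)^j)$, the Wigner transform of $\tfrac1{i\hbar}[\Phi(x_l-x_r),D]$. Its integral kernel is $\tfrac1{i\hbar}\bigl(\Phi(x_l-x_r)-\Phi(y_l-y_r)\bigr)D(\un x_j,\un y_j)$; inserting the Fourier representation of $\Phi$ and reconstructing $D(\un x_j+\tfrac\hbar2\un y_j,\un x_j-\tfrac\hbar2\un y_j)$ from $f:=W_\hbar[D]$ via the inversion of \eqref{WTransf}, the $\un y_j$-integrals reduce to elementary (Dirac) integrals and one is left with
\[
\ba
W_\hbar\bigl[\tfrac1{i\hbar}[\Phi(x_l-x_r),D]\bigr]
&=\tfrac1{i\hbar}\!\int_{\bR^d}\!\widehat\Phi(h)\,e^{ih\cdot(x_l-x_r)}\bigl(f(\ldots,v_l{-}\tfrac\hbar2 h,\ldots,v_r{+}\tfrac\hbar2 h,\ldots)\\
&\qquad\qquad\qquad\qquad-f(\ldots,v_l{+}\tfrac\hbar2 h,\ldots,v_r{-}\tfrac\hbar2 h,\ldots)\bigr)\,dh\,,
\ea
\]
the other phase-space variables being untouched. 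Taking the symplectic Fourier transform \eqref{tft} then turns the factor $e^{ih\cdot(x_l-x_r)}$ into the shift $\un\xi_j\mapsto\un\xi_j+\un h^l_j-\un h^r_j$ of the argument of $\widetilde f$, and turns the velocity translations into phases $e^{\mp i\frac\hbar2 h\cdot\eta_l}$, $e^{\pm i\frac\hbar2 h\cdot\eta_r}$, which recombine into $\pm\,2\sin\!\bigl(\tfrac\hbar2(\eta_r-\eta_l)\cdot h\bigr)/\hbar$. Summing over $l\ne r$, keeping the factor $\tfrac12$ from \eqref{STj}, and using that $\Phi$ is even --- so that $\widehat\Phi(-h)=\widehat\Phi(h)$, which allows one to symmetrise the result by the change of variable $h\mapsto-h$ --- gives the stated formula for $\widetilde{T_jf_j^N}$.

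For $C_{j+1}$ I would run the same computation on $\tfrac1{i\hbar}\sum_{r=1}^j[\Phi(x_r-x_{j+1}),D]$, with $D$ trace-class on $L^2((\bR^d)^{j+1})$, and then pass to the $j$-th marginal. In symplectic Fourier variables the marginalisation is simply evaluation at $\xi_{j+1}=0$ and $\eta_{j+1}=0$; the one point requiring attention is that the coupling factor $e^{ih\cdot(x_r-x_{j+1})}$ in the $r$-th term shifts $\xi_{j+1}$ to $-h$ \emph{before} one sets the free variable $\xi_{j+1}=0$, which is precisely why the argument of $\widetilde{f_{j+1}^N}$ carries $-h$ in its $(j+1)$-st position. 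Setting $\eta_{j+1}=0$ collapses the sine to $2\sin(\tfrac\hbar2\eta_r\cdot h)/\hbar$ and the $(j+1)$-st velocity entry to $0$, which yields the stated formula for $\widetilde{C_{j+1}f_{j+1}^N}$.

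I expect the main obstacle to be a matter of care rather than of ideas: keeping track of the sign and normalisation conventions in \eqref{WTransf}, in its inversion formula, in \eqref{tft} and in the definition of $\widehat\Phi$, together with the index combinatorics of the shifts $\un h^r_j$ from \eqref{defh} and the collapse of the $(j+1)$-st slot under marginalisation; the evenness of $\Phi$ is what ultimately lets one absorb the residual sign ambiguities and write $T_j$ and $C_{j+1}$ in the symmetric form displayed in the lemma. Once the two Fourier formulas are in place, the equivalence of the $N$-Schr\"odinger and $N$-Wigner hierarchies follows at once from the bijectivity of $W_\hbar$: a curve $t\mapsto D^N_j(t)$ solves \eqref{NShier} if and only if $t\mapsto f^N_j(t)=W_\hbar[D^N_j(t)]$ solves the displayed system, with the corresponding factorised initial data.
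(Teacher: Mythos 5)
Your proposal is correct and follows essentially the same route as the paper's Appendix~\ref{prouf}: both reduce the lemma to a Fourier-side computation of the conjugated commutators, with marginalisation realised as evaluation of the transform at zero frequency in the last slots (which is exactly why the $-h$ survives in the $(j+1)$-st position of $\widetilde{f^N_{j+1}}$), and with the evenness of $\Phi$ (equivalently the exchange symmetry of $f^N$) absorbing the residual sign and index ambiguities. The only organisational difference is that the paper Fourier-transforms the full $N$-body Wigner equation \eqref{Nw} and then restricts to $(\un\xi_j,0_{N-j};\un\eta_j,0_{N-j})$, splitting the sum over pairs $(l,r)$ into the three cases $l,r\le j$, $l,r>j$ and mixed, whereas you Wigner-transform the already-marginalised hierarchy \eqref{NShier} term by term; the computations are identical.
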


\smallskip
The proof of this lemma is given in Appendix \ref{prouf}.

\smallskip
\noindent
\textbf{Remark.} One recovers the classical BBGKY hierarchy as the formal limit of the above hierarchy of Wigner equations in the limit as $\hbar\to 0$.

\bigskip
One has obviously
$$
\widetilde{S_j(t)f_j}(\un\xi_j;\un\eta_j)=\widetilde{f_j}(\un\xi_j;\un\eta_j-t\un\xi_j)\,.
$$
Therefore
$$
\ba
\widetilde{S_j(-t)T_jS_j(t)f_j^N}(\un\xi_j;\un\eta_j)&
\\
=\tfrac12\sum_{1\leq l\neq r\leq j}\int_{\bR^d}{dh}\widehat\Phi(h)\frac{2\sin{(\frac{\hbar}2(\eta_r-\eta_l+t(\xi_r-\xi_l))\cdot h)}}\hbar&
\\
\times\widetilde{f^N_j}(\un\xi_j+\un h_l-\un h_r;\un\eta_j-t(\un h_l-\un h_r))&\,,
\ea
$$
and
$$
\ba
\widetilde{S_j(-t)C_{j+1}S_{j+1}(t)f_{j+1}^N}(\un\xi_j;\un\eta_j)&
\\
=\sum_{r=1}^j\int _{\bR^d}{dh}\widehat\Phi(h)\frac{2\sin{(\frac{\hbar}2(\eta_r+t\xi_r)\cdot h)}}\hbar\widetilde{f_{j+1}^N}(\un\xi_j-\un h_r,h;\un\eta_j+t\un h_r,-th)&\,.
\ea
$$

For $\beta'>\beta$, one has
$$
\ba
\norm{S_j(t)f_j}_\beta\leq\norm{f_j}_{\beta'}\sup_{\un\xi_j,\un\eta_j}\exp\left(-\beta'\sum_{i=1}^j(|\xi_i|+|\eta_i-t\xi_i|)+\beta\sum_{i=1}^j(|\xi_i|+|\eta_i+t\xi_i|)\right)&
\\
\leq\norm{f_j}_{\beta'}\exp\left(-(\beta'-\beta)\sum_{i=1}^j(|\xi_i|+|\eta_i|)+\beta'|t|\sum_{i=1}^j|\xi_i|\right)\leq\norm{f_j}_{\beta'}&\,,
\ea
$$
provided $\frac{\beta'-\beta}{\beta'}\geq |t|$, which proves \eqref{St}. 

Moreover, still with $\beta'>\beta$, 
$$
\ba
\norm{S_j(-t)C_{j+1}S_{j+1}(t)f_{j+1}}_\beta\leq\norm{f_{j+1}}_{\beta'}\sup_{\un\xi_j,\un\eta_j}e^{\beta\sum_{i=1}^j(|\xi_i|+|\eta_i|)}&
\\
\times\sum_{r=1}^j\int_{\bR^d} {dh}|\widehat\Phi(h)||h|(|\eta_r|+|t\xi_r|)e^{-\beta'\sum_{i=1}^j(|\xi_i-\de_{r,i}h|+|\eta_i+t\de_{r,i}h|)}e^{-\beta'(|h|+|th|)}&
\\
\leq\norm{f_{j+1}}_{\beta'}(1+|t|)C_\Phi^0\sup_{\un\xi_j,\un\eta_j}\sum_{k=1}^j(|\xi_k|+|\eta_k|)e^{(\beta-\beta')\sum_{i=1}^j(|\xi_i|+|\eta_i|)}&
\\
\leq\norm{f_{j+1}}_{\beta'}\frac{(1+|t|)C_\Phi^0}{e(\beta'-\beta)}&\,.
\ea
$$
The second inequality above uses the triangle inequalities $|\eta_r+th|\geq|\eta_r|-|th|$ and $|\xi_r-h|\geq|\xi_r|-|h|$. This establishes \eqref{Cj}. 

To prove \eqref{Tj} we use the same type of argument:
$$
\ba
\norm{S_j(-t)T_jS_j(t)f_j}_\beta\leq\norm{f_{j}}_{\beta'}\sup_{\un\xi_j,\un\eta_j}e^{\beta\sum_{i=1}^j(|\xi_i|+|\eta_i|)}\tfrac12\sum_{1\le r\not=l\le j}\int_{\bR^d}{dh}|\widehat\Phi(h)||h|&
\\
\times|\eta_l-\eta_r+t(\xi_l-\xi_r)|e^{-\beta'\sum_{i=1}^j(|\xi_i+\de_{i,l}h-\de_{i,r}h|+|\eta_i-t\de_{i,l}h+t\de_{i,r}h|)}&
\\
\le\norm{f_{j}}_{\beta'}\sup_{\un\xi_j,\un\eta_j}e^{(\beta-\beta')\sum_{i=1}^j(|\xi_i|+|\eta_i|)}&
\\
\times(1+|t|)\sum_{1\le r<l\le j}(|\eta_r|+|\eta_k|+|\xi_r|+|\xi_k|)\int_{\bR^d}{dh}|\widehat\Phi(h)|h|e^{2\beta'|h|(1+|t|)}&
\\
\leq\norm{f_{j}}_{\beta'}j\frac{(1+|t|)C_\Phi^{\beta'}(t)}{e(\beta'-\beta)}&\,.
\ea
$$
\end{proof}


\section{Comparison with Hartree}\label{comphar}


We want now to compare the solution of the $N$-body Wigner  hierarchy to the solution of the infinite, Hartree hierarchy. 

The solution of the Hartree hierarchy \rb{with initial condition $\un f^{in}$  (defined in Theorem \ref{hier})
will be denoted by 
$\un f^N_H(t)$.
 It} can also be constructed exactly as in the previous section, starting from the string ${\dys}_n$ given by the formula
$$
\left\{
\ba
{}&\bcb {\dys}_n\ec(t,t_1,\dots,t_n)=S(t)S(-t_n)CS(t_n)\dots S(-t_1)CS(t_1)\un {f^{in}}\,,\qquad n>0\,,
\\
&{\dys}_0(t):=S(t)\un f^{in}\,,
\ea
\right.
$$
where
$$
C\un f:=\{C_{j+1}f_{j+1}\}_{j\ge 1}\,.
$$

Let ${\dys}^N_n$ be the string leading to the $N$-body Wigner hierarchy, that is
$$
\left\{
\ba
{}&{\dys}_n^N(t,t_1,\dots,t_n):=S(t)S(-t_n)(T^N+C^N)S(t_n)\ldots S(-t_1)(T^N+C^N)S(t_1)\un f^{in},
\\
&{\dys}^N_0(t):=S(t)\un f^{in}\, ,
\ea
\right. 
$$
where we recall that $T^N$ and $C^N$ are given  by \eqref{TNCN}.

Finally we define ${\dys}_n^{HH_N}$  by
$$
\left\{
\ba
{}&{\dys}^{HH_N}_n(t,t_1,\dots,t_n)=S(t)S(-t_n)C^NS(t_n)\dots S(-t_1)C^NS(t_1)\un {f^{in}}\,,\qquad n>0\,,
\\
&{\dys}^{HH_N}_0(t):=S(t)\un f^{in}\,.
\ea
\right.
$$
Notice that ${\dys}_0={\dys}_0^{HH_N}=\bcb {\dys}_0^{N}$. 

\subsection{Comparing ${\dys}^N_n$\ec to ${\dys}_n^{HH_N}$}\label{5.1}

For each $n_0\geq 1$, one has 
$$
\ba
\left\|\sum_{n=0}^\infty\int_0^tdt_n\int_0^{t_n}dt_{n-1}\dots\int_0^{t_{2}}dt_1\left({\dys}^N_n-{\dys}_n^{HH_N}\right)_j(t,t_1,\dots,t_n)\right\|_{\beta}&
\\
\leq\left\|\sum_{n=1}^{n_0}\int_0^tdt_n\int_0^{t_n}dt_{n-1}\dots\int_0^{t_{2}}dt_1\left({\dys}^N_n-{\dys}_n^{HH_N}\right)_j(t,t_1,\dots,t_n)\right\|_{\beta}+R(n_0)_j&\,,
\ea
$$
where $R(n_0)$ is the sum of the norms of the two remainders of the (convergent) Dyson expansions of $g^N_n$ and $g^{HH_N}_n$. 

Proceeding as in \eqref{estigeo}, one finds that the remainder for $g^N_n$ is estimated by
$$
\ba
\sum\limits_{n>n_0}\left\|\int_0^t dt_1\int_0^{t_1}dt_2\dots\int_0^{t_{n-1}}dt_n{\dys}^N_n(t,t_1,\dots,t_n)\right\|_{\alpha,\beta}&
\\
\le\sum_{n>n_0}\left(\frac{2(1+|t|)|t|C_\Phi^{\beta'}(t)e^\alpha }{(\beta'-\beta_0)}\right)^n\norm{\un f^{in}}_{\alpha,\beta'}&
\\
\le\sum_{n>n_0}\frac{|t|^n}{T^n}\norm{\un f^{in}}_{\alpha,\beta'}=\frac{(|t|/T)^{n_0+1}}{1-\frac{|t|}{T}}\norm{\un f^{in}}_{\alpha,\beta'}&\,,
\ea
$$
while the remainder of $g^{HH_N}_n$ is estimated by
$$
\ba
\sum\limits_{n>n_0}\left\|\int_0^t dt_1\int_0^{t_1}dt_2\dots\int_0^{t_{n-1}}dt_ng^{HH_N}_n(t,t_1,\dots,t_n)\right\|_{\alpha,\beta}&
\\
\le\sum_{n>n_0}\left(\frac{(1+|t|)|t|C_\Phi^0e^\alpha }{(\beta'-\beta_0)}\right)^n\norm{\un f^{in}}_{\alpha,\beta'}
\le\sum_{n>n_0}\left(\frac{(1+|t|)|t|C_\Phi^\beta(t)e^\alpha }{(\beta'-\beta_0)}\right)^n\norm{\un f^{in}}_{\alpha,\beta'}&
\\
\le\sum_{n>n_0}\frac{|t|^n}{(2T)^n}\norm{\un f^{in}}_{\alpha,\beta'}=\frac{(|t|/2T)^{n_0+1}}{1-\frac{|t|}{2T}}\norm{\un f^{in}}_{\alpha,\beta'}&\,.
\ea
$$
The slight difference between both estimates comes from the term $T^N$ in the definition of $\un g^N_n$: see the discussion before (\ref{EstiSigman}), together with \eqref{estC} and \eqref{Tab}.  Using the elementary inequality
\be\lb{ElemIneq}
\|g_j\|_\b\le e^{\a j}\|\un g\|_{\a,\b}\,,
\ee
we arrive at the following estimate for the remainder:
\be\label{preum}
R(n_0)_j\leq \frac{2e^{\alpha j}}{1-\frac{|t|}T}\left(\frac{|t|}T\right)^{n_0+1}\norm {\un f^{in}}_{\alpha,\beta'}\,.
\ee

In order to estimate the sum up to $n_0$, we expand the string ${\dys}^N_n\ec (t, t_1,\dots,t_n)$ as 
\be\label{stringo}
{\dys}^N_n(t,t_1,\dots,t_n)=S(t)\sum\limits_{k=0}^{n}\ \ \ \sum\limits_{\substack{\sigma_1,\dots,\sigma_n\in\{0,1\},\\ \sigma_1+\ldots+\sigma_n=n-k}}O_{\sigma_n}(t_n)\dots O_{\sigma_1}(t_1)\un f^{in}
\ee
where 
$$
O_0(t):=S(-t)T^NS(t)\quad\hbox{ and }O_1(t)=S(-t)C^NS(t)\,.
$$
Each occurence of $\sigma=0$ in the inner sum contributes to the number of operators $T^N$s in the expansion, which is precisely $k$. Notice that the term corresponding to $k=0$ is precisely $g_n^{HH_N}$. 

For $\beta<\beta'$, we set
$$
\hat\beta_k:=\beta+k\frac{\beta'-\beta}n\,,\qquad k=0,\dots,n\,,
$$
so that 
$$
\hat\beta_0=\beta\,,\quad\hat\beta_n=\beta'\,,\quad\hbox{ and }\hat\beta_{k+1}-\hat\beta_k=\frac{\beta'-\beta}n\,.
$$
One has
$$
\ba
({\dys}^N_n-{\dys}_n^{HH_N})_j(t,t_1,\dots,t_n)&
\\
=S(t)\sum\limits_{k=1}^{n}\,\,\,\sum\limits_{\substack{\sigma_1,\dots,\sigma_n\in\{0,1\}\\ \sigma_1+\ldots+\sigma_n=n-k}}(O_{\sigma_n}(t_n))_{j+\sigma_n}(O_{\sigma_{n-1}}(t_{n-1})\dots O_{\sigma_1}(t_1)\un f^{in})_{j+\sigma_n}&\,.
\ea
$$ 
(Indeed, as observed above, the contribution of $k=0$ is $g_n^{HH_N}$.) Using Lemma \ref{propesti} and iterating as in Section \ref{5.1} shows that
$$
\ba
\norm{(\bcb{\dys}^N_{n}\!-\!{\dys}_n^{HH_N})_j(t,t_1,\dots,t_n)}_\beta
\leq\sum\limits_{k=1}^{n}\sum\limits_{\substack{\sigma_1,\dots,\sigma_n\in\{0,1\}\\ \sigma_1+\ldots+\sigma_n=n-k}}\left(\frac{j}N\right)^{1-\sigma_n}\left(\frac{(1+|t|)C_\Phi^{\beta'}(t)}{e(\beta'-\beta)}n\right)&
\\
\times\norm{(O_{\sigma_{n-1}}(t_{n-1})\dots O_{\sigma_1}(t_1)\un f^{in})_{j+\sigma_n}}_{\hat\beta_1}&
\\
\leq\sum\limits_{k=1}^{n}\,\,\,\sum\limits_{\substack{\sigma_1,\dots,\sigma_n\in\{0,1\}\\ \sigma_1+\ldots+\sigma_n=n-k}}\left(\frac{j}N\right)^{1-\sigma_n}\left(\frac{j+\sigma_n}N\right)^{1-\sigma_{n-1}}
	\left(\frac{(1+|t|)C_\Phi^{\beta'}(t)}{e(\beta'-\beta)}n\right)^2&
\\
\times\norm{(O_{\sigma_{n-2}}(t_{n-2})\dots O_{\sigma_1}(t_1)\un f^{in})_{j+\sigma_n+\sigma_{n-1}}}_{\hat\beta_2}&
\\
\leq\sum\limits_{k=1}^{n}\left(\frac{j+n}N\right)^k\left(\sum\limits_{\substack{\sigma_1,\dots,\sigma_n\in\{0,1\}\\ \sigma_1+\ldots+\sigma_n=n-k}}1\right)
\left(\frac{(1+|t|)C_\Phi^{\beta'}(t)}{e(\beta'-\beta)}n\right)^n\norm{ f^{in}_{j+\sigma_n+\sigma_{n-1}+\dots+\sigma_1}}_{\beta'}&
\\
\leq e^{\alpha (j+n)}\left(\frac{(1+|t|)C_\Phi^{\beta'}(t)}{e(\beta'-\beta)}n\right)^n\sum_{k=1}^n\left(\frac{j+n}N\right)^k{n\choose k}\norm{\un f^{in}}_{\alpha,\beta'}&
\\
=e^{\alpha j}\left(\frac{(1+|t|)C_\Phi^{\beta'}(t)e^\alpha}{e(\beta'-\beta)}n\right)^n\left(\left(1+\frac{j+n}N\right)^n-1\right)\norm{\un f^{in}}_{\alpha,\beta'}&\,.
\ea
$$
Here we have used the obvious inequalities \eqref{ElemIneq} and $\sigma_n+\sigma_{n-1}+\dots+\sigma_1\leq n$, and the binomial formula for the last equality.

For $0\le n\leq n_0$, the mean value inequality implies that
$$
\left(1+\frac{j+n}N\right)^n-1\leq n\,\frac{j+n}N\left(1+\frac{j+n_0}{N}\right)^{n_0-1}\,;
$$ 
besides
\be\label{cn0}
\ba
\left(1+\frac{j+n_0}{N}\right)^{n_0-1}&=\exp\left((n_0-1)\log\left(1+\frac{j+n_0}{N}\right)\right)
\\
&\leq\exp\left(\frac{(n_0-1)(j+n_0)}{N}\right)=:c_{n_0}.
\ea
\ee

Therefore, using again \eqref{tbeta},
$$
\ba
\left\|\sum_{n=0}^{n_0}\int_0^tdt_n\int_0^{t_n}dt_{n-1}\dots\int_0^{t_{2}}dt_1\left({\dys}^N_n-{\dys}_n^{HH_N}\right)_j(t,t_1,\dots,t_n))\right\|_{\beta}
\\
\leq c_{n_0}e^{\alpha j}\sum^{n_0}_{n=1}\frac1{n!}\left(|t|\frac{(1+|t|)C_\Phi^{\beta'}(t)e^\alpha n}{e(\beta'-\beta)}\right)^n\frac{j+n}Nn\norm{\un f^{in}}_{\alpha,\beta'}
\\
\leq c_{n_0}\frac{e^{\alpha j}}N\sum^{\infty}_{n=1}\left(|t|\frac{(1+|t|)C_\Phi^{\beta'}(t)e^\alpha}{(\beta'-\beta)}\right)^n({j+n})n\norm{\un f^{in}}_{\alpha,\beta'}
\\
\le c_{n_0}\frac{e^{\alpha j}}N\left(\frac{(j+1)\frac{|t|}T}{(1-\frac{|t|}T)^2}+\frac{2(\frac{|t|}T)^2}{(1-\frac{|t|}T)^3}\right)\norm{\un f^{in}}_{\alpha,\beta'}
\\
=\frac{|t|}Te^{\alpha j}\left(c_{n_0}\frac{c_1}N\right)\norm{\un f^{in}}_{\alpha,\beta'},
\ea
$$
with
\be\label{C1}
c_1:=\frac {j+1}{(1-{\frac{|t|}T})^2}+\frac{2|t|/T}{(1-{\frac{|t|}T})^3}
\ee
and $c_{n_0}$ defined by \eqref{cn0}. Eventually, we arrive at the bound
\be\lb{mko}
\ba
\left\|\sum\limits_{n=0}^\infty\int_0^tdt_n\int_0^{t_n}dt_{n-1}\dots\int_0^{t_{2}}dt_1\left({\dys}^N_n-{\dys}_n^{HH_N}\right)_j(t,t_1,\dots,t_n))\right\|_{\beta}&
 \\
 \le\frac{|t|}Te^{\alpha j} \left(c_{n_0}\frac{c_1}N+\frac{2\left({|t|}/T\right)^{n_0}}{1-\frac{|t|}T}\right)\norm{\un f^{in}}_{\alpha,\beta'}&\,.
\ea
\ee

\subsection{Comparing $\bcb {\dys}_n\ec$ and ${\dys}_n^{HH_N}$}

We first notice that 
$$
\ba
({\dys}^{HH_N}_n(t,t_1,\dots,t_n))_j&
\\
=S(t)S(-t_1)C^N_{j+1}S(t_1)S(-t_2)C^N_{j+2}S(t_2)\dots S(-t_n)C^N_{j+n}S(t_n)f^{in}_{j+n}&\,,
\ea
$$ 
and  that the same holds for $\dys_n$:
$$
\ba
({\dys}_n(t,t_1,\dots,t_n))_j&
\\
=S(t)S(-t_1)C_{j+1}S(t_1)S(-t_2)C_{j+2}S(t_2)\dots S(-t_n)C_{j+n}S(t_n)f^{in}_{j+n}&\,.
\ea
$$
Since $C^N_{j+1}=(1-\frac jN)C_{j+1}$, one has
$$
({\dys}_n-{\dys}^{HH_N}_n)_j(t,t_1,\dots,t_n)=\left(1-\prod\limits_{l=1}^n\left(1-\frac{j+l}N\right)\right)({\dys}_n(t,t_1,\dots,t_n))_j\,.
$$ 
As before we truncate the summation at $n_0$, and consider the finite sum
$$
\sum_{n=1}^{n_0}\int_0^tdt_n\int_0^{t_n}dt_{n-1}\dots\int_0^{t_{2}}dt_1\left(\bcb {\dys}_n^{}\ec-{\dys}_n^{HH_N}\right)_j(t,t_1,\dots,t_n)\,.
$$
For $j+n_0\leq N$ and $n\leq n_0$,
$$
1-\prod\limits_{l=1}^{n}\left(1-\frac{j+l}N\right)\leq 1-\left(1-\frac{j+n}N\right)^n\leq \frac{n(j+n)}N\,.
$$
Therefore
$$
\ba
\left\|\sum_{n=1}^{n_0}\int_0^tdt_n\int_0^{t_n}dt_{n-1}\dots\int_0^{t_{2}}dt_1\left(\bcb {\dys}_n^{}\ec-{\dys}_n^{HH_N}\right)_j(t,t_1,\dots,t_n)\right\|_{\beta}&
\\
\leq
\sum_{n=1}^{n_0}\int_0^tdt_n\int_0^{t_n}dt_{n-1}\dots\int_0^{t_{2}}dt_1\frac{n(j+n)}N\norm{({\dys}_n^{}(t,t_1,\dots,t_n)_j}_\beta&
\\
\leq
\sum_{n=1}^{n_0}\int_0^tdt_n\int_0^{t_n}dt_{n-1}\dots\int_0^{t_{2}}dt_1\frac{n(j+n)}Ne^{\alpha j}\norm{{\dys}_n^{}(t,t_1,\dots,t_n)}_{\alpha,\beta}&
\\
\le
\sum_{n=1}^\infty \frac{|t|^n}{n!}\frac{n(j+n)}Ne^{\alpha j}\left(\frac{(1+|t|)C_\Phi^0e^\alpha }{e(\beta'-\beta)}n\right)^n\norm{\un f^{in}}_{\alpha,\beta'}&
\\
\leq\sum_{n=1}^\infty\frac{n(j+n)}Ne^{\alpha j}\left(\frac{|t|(1+|t|)C_\Phi^{\beta'}(t)e^\alpha }{\beta'-\beta}\right)^n\norm{\un f^{in}}_{\alpha,\beta'}&
\\
\leq
e^{\alpha j}\sum_{n=1}^\infty\left(\frac{|t|}T\right)^n\frac{n(j+n)}N\norm{\un f^{in}}_{\alpha,\beta'}=\frac{e^{\alpha j}}N\frac{|t|}Tc_1\norm{\un f^{in}}_{\alpha,\beta'}&\,.
\ea
$$
Here we have used \eqref{cxx} and \eqref{tbeta} with $\beta<\beta_0$, together with the inequality
$$
\frac{(1+{|t|}){|t|}C_\Phi^0e^\alpha}{\beta'-\beta}\leq\frac{2(1+|t|)|t|C_\Phi^{\beta'}(t)e^\alpha}{\beta'-\beta_0}\leq\frac{|t|}T\,,
$$
and the definition \eqref{C1} of $c_1$. 

The remainder for the series expansion giving ${\dys}_n-{\dys}^{HH_N}_n$ is controlled as in Section \ref{5.1}: see estimate \eqref{preum}. Hence
\be\lb{mko'}
\ba
\left\|\sum\limits_{n=0}^\infty\int_0^tdt_n\int_0^{t_n}dt_{n-1}\dots\int_0^{t_{2}}dt_1\left(\bcb {\dys}_n^{}\ec-{\dys}_n^{HH_N}\right)_j(t,t_1,\dots,t_n))\right\|_{\beta}&
\\
\leq\frac{|t|}Te^{\alpha j}\left(\frac {c_1} N+\frac{2\left({|t|}/T\right)^{n_0}}{1-\frac{|t|}T}\right)\norm{\un f^{in}}_{\alpha,\beta'}&\,.
\ea
\ee

\subsection{End of the proof of Theorem \ref{main}}\label{endproof}

Choosing now 
$$
n_0\equiv n_0(N):=[\log(N)/\log(T/|t|)]+1\,,
$$ 
we see that
$$
\left(\frac{|t|}T\right)^{n_0}\le\frac1N\,.
$$
Besides
$$
\ba
c_{n_0(N)}=&\exp\left(\frac{(n_0(N)-1)(j+n_0(N))}{N}\right)
\\
\le&\exp\left(\frac{\log(N)((j+1)\log(T/|t|)+\log(N))}{N(\log(T/|t|))^2}\right)\,.
\ea
$$
Elementary computations show that, for each $a>0$, the function 
$$
x\mapsto x(a+x)e^{-x}
$$
reaches its maximum on $[0,+\infty)$ for 
$$
x=x(a):=1+\tfrac12\left(\sqrt{a^2+4}-a\right)\,.
$$
Hence
$$
\ba
\g(j,\tfrac{|t|}T)&:=\exp\left(\frac{(j+1)\log(\frac{T}{|t|})x((j+1)\log(\frac{T}{|t|}))+x((j+1)\log(\frac{T}{|t|}))^2}{e^{x((j+1)\log(\frac{T}{|t|}))}(\log(\frac{T}{|t|}))^2}\right)
\\
&\,\ge c_{n_0(N)}\,.
\ea
$$
Observe that
$$
x(a)=1+\frac1a+O\left(\frac1{a^3}\right)\quad\hbox{ as }a\to+\infty\,.
$$
Hence
$$
\frac{ax(a)a+x(a)^2}{e^{x(a)}}=\frac1e\left(a+3+\frac4a+O\left(\frac1{a^2}\right)\right)
$$
as $a\to+\infty$, and therefore
$$
\g(j,\tau)\sim\exp\left(\frac{(j+1)\log(1/\tau)+3}{e\log(\tau)^2}\right)\quad\hbox{ as }(j+1)\log(1/\tau)\to+\infty
$$
uniformly as $\tau$ runs through compact subsets of $[0,1)$.

\rb{Let us remark that  Theorem \ref{hier} remains obviously true when the  Hartree hierarchy replaces the BBGKY one. Therefore, defining $(\un F^{in}_H)_j=(F^{in})^{\otimes j},\ j=~1,\dots$, so that $\norm{\un F^{in}-\un F^{in}_H}_{\alpha,\beta'}
=
\frac{(e^{-\alpha}\norm{F^{in}}_{\beta'})^{N+1}}{1-e^{-\alpha}\norm{F^{in}}_{\beta'}}$, we define $\un F^N_H(t)$ and $\un F_H(t)$ as the evolutions of 
$\un F^{in}$ and $\un F^{in}_H$ by the Hartree hierarchy flow. 
\\ We get
that 
$\norm{\un F^N_H(t)-\un F_H(t)}_{\alpha,\beta}
\leq
\frac{(e^{-\alpha}\norm{F^{in}}_{\beta'})^{N+1}
}
{({1-\frac{|t|}T})(1-e^{-\alpha}\norm{F^{in}}_{\beta'})}
\leq
\frac{e^{-\alpha}\norm { F^{in}}_{\beta'}}{N(1-\frac{|t|}T)(1-e^{-\alpha}\norm{F^{in}}_{\beta'})}$ 
(remember that $\alpha>\log{\norm{F^{in}}_{\beta'}}$), so that
$\norm{(\un F^N_H)_j(t)-F(t)^{\otimes j}}_\beta\leq \frac{e^{\alpha (j-1)}\norm { F^{in}}_{\beta'}}{N(1-\frac{|t|}T)(1-e^{-\alpha}\norm{F^{in}}_{\beta'})}$.}

Adding \eqref{mko} and \eqref{mko'} and applying the triangle inequality, we find that
$$
\ba
\left\|\sum\limits_{n=0}^\infty\int_0^tdt_n\int_0^{t_n}dt_{n-1}\dots\int_0^{t_{2}}dt_1\left(\bcb {\dys}_n^{N}\ec-{\dys}_n^{}\right)_j(t,t_1,\dots,t_n))\right\|_{\beta}&
\\
\leq\frac{|t|}T\frac{e^{\alpha j}}N\left((1+c_{n_0(N)})c_1+\frac{4}{1-\frac{|t|}T}\right)\norm{\un f^{in}}_{\alpha,\beta'}&\,.
\ea
$$

Therefore (abusing the notation as mentioned in \eqref{AbusNormRho}), we arrive at the inequality
\rb{\be\label{grrr}
\ba
\norm {F^N_{j}(t)-F(t)^{\otimes j}}_{\beta}\leq\norm {f^N_{j}(t)-(\un f^N_H(t))_j}_{\beta}+\norm{(\un F^N_H)_j(t)-F(t)^{\otimes j}}_\beta&
\\
\leq\frac{e^{\alpha j}}N
\frac{|t|}T
\left((1+ c_{n_0})\left(\frac {j+1}{(1-{\frac{|t|}T})^2}+\frac{2|t|/T}{(1-{\frac{|t|}T})^3}\right)+\!\frac{4}{1-\frac{|t|}T}\right)\norm{\un f^{in}}_{\alpha,\beta'}&\\
+\norm{(\un F^N_H)_j(t)-F(t)^{\otimes j}}_\beta&
\\
\leq\frac{e^{\alpha j}}N
\left((1+\gamma(j,t))\left(\frac {j+1}{(1-{\frac{|t|}T})^2}+\frac{2|t|/T}{(1-{\frac{|t|}T})^3}\right)+\!\frac{5}{1-\frac{|t|}T}\right)\frac{e^{-\alpha}\norm{f^{in}}_{\beta'}}{1-e^{-\alpha}\norm{f^{in}}_{\beta'}}&
\\
=\frac{C(j,|t|/T)}N
\frac{e^{\alpha(j-1)}\norm{f^{in}}_{\beta'}}{1-e^{-\alpha}\norm{f^{in}}_{\beta'}}&\,,
\ea
\ee
}
where $1~\leq~ j~\leq~ N$ and
\be\lb{DefCjt}
C(j,\tau):=(1+\gamma(j,\tau))\left(\frac {j+1}{(1-\tau)^2}+\frac{2\tau}{(1-\tau)^3}\right)+\!\frac{\rb{5}}{1-\tau}\,.
\ee 
In particular
$$
C(j,\tau)\sim\frac {j}{(1-\tau)^2}\left(1+\exp\left(\frac{(j+1)\log(1/\tau)+3}{e\log(\tau)^2}\right)\right)
$$
as $j\to\infty$ uniformly as $\tau$ runs through compact subsets of $[0,1)$.

Taking $\alpha=\log{\norm{F^{in}}_{\beta'}}+\log{2}$ and $\tau=|t|/T$ in \eqref{grrr} leads immediately to \eqref{maineq}.

\bigskip
It remains to prove (\ref{nextineq})-\eqref{jjj}. In view of the asymptotic equivalence above for $C(j,\tau)$, one has
$$
\ba
C(j,\tau)\le&\frac {2j}{(1-\tau)^2}\exp\left(\frac{(j+1)\log(1/\tau)+3}{e\log(\tau)^2}\right)(1+\eps(j,\tau))
\\
=&\frac {2D(\tau)j}{(1-\tau)^2}\exp\left(j\left(\log{2}+\frac{1}{e\log(1/\tau)}\right)\right)(1+\eps(j,\tau))
\ea
$$
for $\tau<1$, where
$$
D(\tau):=\exp\left(\frac{\log(1/\tau)+3}{e\log(\tau)^2}\right)
$$
and where $\eps(j,\tau)\to 0$ as $j\to\infty$ uniformly in $\tau$ over compact subsets of $[0,1)$. 

Hence, if $\a'>\a$ and
\be\label{estijgeneral}
1\le j\le\frac{\log N+\log(T/|t|)}{\a'+1/e\log(T/|t|)}
\ee
one has
$$
\frac{C(j,|t|/T)}{N}\frac{|t|}{T}e^{\a j}\le\frac{2D(|t|)}{(1-\frac{|t|}T)^2}\frac{\log(NT/|t|)}{\a'+1/e\log(T/|t|)}\left(\frac{|t|}{TN}\right)^{\frac{\a'-\a}{\a'+1/(e\log(T/|t|))}}(1+\eps(j,t))
$$
and the right hand side of this last inequality vanishes as $j\to\infty$ uniformly in $|t|/T$ over compact subsets  of $[0,1)$, since $D(|t|/T)$ is an increasing function of $|t|/T$. 

Taking $\alpha=\log{\norm{F^{in}}_{\beta'}}+\log{2}$ and $\alpha'=\alpha+\log{2}>\alpha$ with $\tau=|t|/T$ leads immediately to \eqref{nextineq}-\eqref{jjj}.

\section{Final remarks}

Our result, based on the Wigner formalism, holds for a short time only, provided that both the potential and initial data are smooth enough. However, this regularity assumption is used exclusively for obtaining estimates that are independent 
of $\hbar$. 

If one is willing to give up this requirement, a global in time convergence (under much less stringent assumptions) can indeed be recovered along the following lines, in the same spirit as in \cite{Spohn,BGM,Pickl1}. In terms of the $L^1$ 
norm of the Fourier transform of the Wigner function in the $x$ variable only, it is easy to see that both operators $T^N$ and $C^N$ are bounded, with a norm that tends to $\infty$ as $\hbar\to 0$, while $S(t)$ is isometric. Proceeding as  above,
we prove the convergence in the mean-field limit, for a short time which depends only on the size of the norm of $f^{in}$. 

However the procedure can be iterated because uniform bounds (in an arbitrary finite time interval) on the norm of the solutions can be obtained by means of $H_s$ (with $s >d/2$) estimates on the Hartree dynamics.

\smallskip
Moreover, although not needed in the present paper, 
the following estimate improves on \eqref{eqmainhier2}, and may be of independent interest:
\be\label{eqmainhier3}
\norm{\un f^N(t)-\un f^{in}}_{\alpha,\beta}\le|t|\left(\frac{\norm {\un f^{in}}_{\alpha,\beta'}}{\frac{\beta'-\beta}{3(1+|t|) C^{\beta'}_\Phi(t)e^\alpha}-|t|}+\frac{\sum\limits_{l=1}^d\norm {v_l\un f^{in}}_{\alpha,\beta'}}{e(\beta'-\beta)}\right)\,.
\ee

This bound is obtained in the following manner. First, an explicit computation based on the mean value theorem shows that
$$
\norm{\un f^{in}-S(t)\un f^{in}}_{\alpha,\beta}\leq\frac{|t|}{e(\beta'-\beta)}\sum\limits_{l=1}^d\norm{v_l\un f^{in}}_{\alpha,\beta'}\,.
$$
Notice that the second indices on the norms used in both sides of the inequality above are different and satisfy $\beta'>\beta$; observe indeed that
$$
\norm{\d_{{\un x}_j}\un \phi}_{\alpha,\beta}\le\frac{\norm{\un \phi}_{\alpha,\beta'}}{e(\beta'-\beta)},\qquad j=1,\ldots,d,
$$
this inequality is applied to $\un\phi=v_j\un f^{in}$. This account for the second term on the right hand side of \eqref{eqmainhier3}. The first term on the right hand side of \eqref{eqmainhier3} is obtained by applying the geometric series
estimate \eqref{estigeo} to
$$
\un {\dys}^N(t)-S(t)\un f^{in}=\sum\limits_{n\ge 1}\int_0^tdt_n\int_0^{t_n}dt_{n-1}\dots\int_0^{t_{2}}dt_1{\dys}_n(t,t_1,\dots,t_n)\,.
$$
At variance with \eqref{estigeo}, the summation here starts with $n=1$; hence 
$$
\ba
\norm{\un f^N(t)-S(t)\un f^{in}}_{\alpha,\beta}\le\sum_{n=1}^\infty\left(\frac{3(1+|t|)|t|C_\Phi^{\beta'}(t)e^\alpha }{(\beta'-\beta_0)}\right)^n\norm{\un f^{in}}_{\alpha,\beta'}
\\
=\norm{\un f^{in}}_{\alpha,\beta'}\frac{\frac{3(1+|t|)|t|C_\Phi^{\beta'}(t)e^\alpha }{(\beta'-\beta_0)}}{1-\frac{3(1+|t|)|t|C_\Phi^{\beta'}(t)e^\alpha }{(\beta'-\beta_0)}}\,.
\ea
$$

\part{Interpolation}\label{interpolation}


\section{$\hbar$-dependent bound}


In this section we give a proof of the convergence of the \rr{(marginals of the)} $N$-body density operator to the \rr{(tensor powers of the)} solution of the Hartree equation in trace norm. This result does not require any regularity condition 
on the two-body potential but the convergence is nonuniform in the Planck constant. Indeed, we estimate both terms in the commutator separately and add the corresponding bounds, without taking into consideration any compensation that 
might come from the difference and might lead, after dividing by $\hbar$, the desired uniformity in the Planck constant. This procedure is of course not original and there is a considerable amount of literature on this subject, starting with the 
seminal paper of Spohn \cite{Spohn1}.

The  new feature  in the analysis below is that we keep track of the dependence in $\hbar$ of the rate of convergence so obtained in Theorem \ref{comparison}. Indeed, in the next section, the estimate in Theorem \ref{comparison} is
interpolated with the convergence rate obtained in \cite{GMouPaul}  in order to obtain the uniform in $\hbar$ bound stated in Theorem \ref{mainlog}. 

The explicit rate of convergence of the mean-field limit has been discussed in \cite{RS} and \cite {CLS}, among others. For $\hbar=1$ and an initial condition in the form of a pure state with appropriate regularity, one can bound the error by 
$Ce^{Ct}/N$  (see \cite{CLS}). This result involves the formalism of quantum field theory in Fock space with $\hbar=1$ and semiclassical methods using $\frac1N$ as a fictitious Planck constant. But it is not obvious to see how the limiting
processes defined by letting the genuine and the fictitious Planck constants $\hbar$ and $\frac1N$ tend to $0$ may affect each other.

On the other hand, iterating the convergence rate obtained from estimating the BBGKY hierarchy for $\hbar=1$ over short time intervals leads to a bound that is much less sharp. It is interesting to compare the result stated in \cite{SSS}  
with the bound obtained in the forthcoming Theorem \ref{comparison} with $\hbar=1$. 

We could not find in the literature a detailed proof of the convergence rate based on iterating on the short time estimate for the BBGKY hierarchy. However, we need this convergence rate to track precisely the $\hbar$-dependence in the 
mean-field limit ($N\to\infty$). Therefore, we shall present the iteration argument on BBGKY hierarchies in detail, following a strategy used in \cite{PWZ} for a different problem. 

Since we seek a trace norm error estimate, it is more convenient to work directly on density operators rather than with their Wigner functions. Consider the unitary flow $U_j(t)$ defined on $L^2(\bR^{jd})$ by the formula
$$
{i\hbar}\dot U_j(t):=-\tfrac12\hbar^2\Delta_{\bR^{jd}}U_j(t)\,,\qquad U_j(0)=\hbox{Id}_{L^2(\bR^{jd})}\,.
$$
Consider next the operator $\cS_j(t)$ defined on density operators by conjugation with $U_j(t)$:
$$
\cS_j(t)F=U_j(t)FU_j(-t)\,.
$$
Obviously, $\cS_j(t)$ is a linear isometry on $\cL^1(L^2((\bR^{dj})))$, the space of trace-class operators equipped with the trace-norm. 

Next, we recall the definition of $\cC_j$ and $\cT_j$ given by \eqref{SCj} and \eqref{STj}. For notational convenience we also denote
\begin{equation}
\cC^\hbar _j:=\frac{\cC_j}{i \hbar}\,;\qquad\cT^\hbar _j:=\frac{\cT_j}{i \hbar}\,,\qquad 1\le j\le N\,.
\end{equation}

Henceforth we denote by $F^N_j(t)$ the solution of the $N$-particle BBGKY hierarchy, and by $F(t)$ the solution of the Hartree equation. Thus $F_j(t)=F(t)^{\otimes j}$ is a solution of the Hartree hierarchy, and we arrive at the following
convergence estimate, which is obviously not uniform as $\hbar\to 0$.

\begin{Thm}\label{comparison}
Let $\Phi\in L^\infty(\bR^d)$. For each $j\ge 1$, there exists $N_0(j)\ge 1$ such that
$$
\|F^N_j (t)-F(t)^{\otimes j}\|_{\cL_1(L^2(\bR^{dj}))}\leq\frac{2^{j+1+\frac{16t\nPh}{\h}}}{N^{\left(2^{-1-\frac{16t\nPh}{\h}}\log{2}\right)}}
$$
for all $t\geq 0$ and all $N\geq\max\left(N_0(j),\exp\left(2^{\frac{16\nPh t}{\h}+1}j\right)\right)$.
\end{Thm}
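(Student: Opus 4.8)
The plan is to run a Cauchy--Kowalevsky-type iteration of the Duhamel formula for the BBGKY hierarchy on a short time interval whose length is comparable to $\h/\norm{\Phi}_{L^\infty}$ --- following, as the paper indicates, the scheme of \cite{PWZ} --- and then to iterate that short-time estimate across $[0,t]$, keeping track of how the power of $N$ degrades and the prefactor grows. To set this up I would pass to the interaction picture: writing $B^N_j(s):=\cS_j(-s)F^N_j(s)$ and $\de_j(s):=B^N_j(s)-\cS_j(-s)F(s)^{\otimes j}$, and using that $\{F(s)^{\otimes j}\}_j$ solves the Hartree hierarchy while $\{F^N_j(s)\}_j$ solves \eqref{NShier} with $\cC^N_{j+1}=\tfrac{N-j}N\cC_{j+1}$, one gets for $t_0\le t$
\[
\de_j(t)=\de_j(t_0)+\int_{t_0}^t\cS_j(-s)\cC^\h_{j+1}\cS_{j+1}(s)\,\de_{j+1}(s)\,ds+\int_{t_0}^t\cS_j(-s)\Big(\tfrac1N\cT^\h_j\cS_j(s)B^N_j(s)-\tfrac jN\cC^\h_{j+1}\cS_{j+1}(s)B^N_{j+1}(s)\Big)ds\,.
\]
Since $\Phi\in L^\infty$, multiplication by $\Phi(x_k-x_l)$ is bounded with norm $\norm{\Phi}_{L^\infty}$, a commutator with it is bounded on $\cL_1$ with norm $2\norm{\Phi}_{L^\infty}$, and partial traces are $\cL_1$-contractions; hence $\|\cC^\h_{j+1}\|_{\cL_1\to\cL_1}\le 2j\norm{\Phi}_{L^\infty}/\h$ and $\|\cT^\h_j\|_{\cL_1\to\cL_1}\le j(j-1)\norm{\Phi}_{L^\infty}/\h$, while each $\cS_j$ is an $\cL_1$-isometry, $\|B^N_j\|_{\cL_1}=1$ and $\|\de_j\|_{\cL_1}\le 2$.

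Next I would fix the step $\tau:=\h/(16\norm{\Phi}_{L^\infty})$ and iterate the Duhamel identity $n$ times on $[t_0,t_0+\tau]$. This produces, besides an $n$-fold tail acting on $\de_{j+n}(\cdot)$, terms in which a chain of $i$ factors $\cC^\h$ acts either on $\de_{j+i}(t_0)$ or on a source at level $j+i$ (the latter carrying $1/N$). A chain of $i$ factors has operator norm at most $(2\norm{\Phi}_{L^\infty}/\h)^i(j+i-1)!/(j-1)!$, and the nested time integrations over $[t_0,t_0+\tau]$ contribute $\le\tau^i/i!$; since $2\norm{\Phi}_{L^\infty}\tau/\h=\tfrac18$, these weights are bounded by $\binom{j+i-1}{i}8^{-i}\le 2^{j-1}4^{-i}$, which is summable. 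The tail is $\le 2^{j-1}4^{-n}\cdot2\to 0$, so letting $n\to\infty$ and summing the source terms (with $\norm{\Phi}_{L^\infty}\tau/\h=\tfrac1{16}$) yields
\[
\sup_{t_0\le s\le t_0+\tau}\|\de_j(s)\|_{\cL_1}\le 2^{j-1}\sum_{i\ge0}4^{-i}\,\|\de_{j+i}(t_0)\|_{\cL_1}+\frac{\tilde C(j)}N,\qquad\tilde C(j)\le c_0\,j^2\,2^j,
\]
for an absolute constant $c_0$ --- note that $\h$ has cancelled out of $\tilde C(j)$. At $t_0=0$ the data are exactly factorized, so $\de_{j'}(0)=0$ for all $j'$, which gives $\|\de_j(s)\|_{\cL_1}\le\tilde C(j)/N$ on $[0,\tau]$.

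To propagate over $[0,t]$, I would introduce the weighted quantity $\mathcal E(s):=\sup_{j\ge1}\|\de_j(s)\|_{\cL_1}/(j^22^j)$, insert the a priori cap $\|\de_{j+i}(t_0)\|_{\cL_1}\le\min\!\big(2,(j+i)^22^{j+i}\mathcal E(t_0)\big)$ into the short-time estimate, and split the sum over $i$ at the index where the minimum switches. A short computation then produces a self-improving inequality of square-root type, $\mathcal E(t_0+\tau)\le C_1\sqrt{\mathcal E(t_0)}\big(1+\log(2/\mathcal E(t_0))\big)^2+C_1/N$ with $C_1$ absolute. Starting from $\mathcal E(0)=0$, hence $\mathcal E(\tau)\le C_1/N$, and iterating over $m:=\lceil t/\tau\rceil=\lceil 16\norm{\Phi}_{L^\infty}t/\h\rceil$ steps, the square root halves the exponent of $1/N$ at each step; up to logarithmic corrections this gives $\mathcal E(t)\le C_2^{\,m}N^{-2^{-(m-1)}}$, whence $\|F^N_j(t)-F(t)^{\otimes j}\|_{\cL_1}=\|\de_j(t)\|_{\cL_1}\le j^22^j\,\mathcal E(t)$. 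Feeding in $m\le 16\norm{\Phi}_{L^\infty}t/\h+1$ and absorbing the factors $j^2$, $C_2^{\,m}$ and the logarithmic corrections into the two hypotheses $N\ge N_0(j)$ and $N\ge\exp\!\big(2^{16\norm{\Phi}_{L^\infty}t/\h+1}j\big)$ --- the latter forcing a small positive power of $N$ to dominate any polynomial in $j$ --- reproduces the stated estimate, with numerator $2^{j+1+16t\norm{\Phi}_{L^\infty}/\h}$ and denominator $N^{(2^{-1-16t\norm{\Phi}_{L^\infty}/\h}\log2)}$.

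The routine parts are the interaction-picture Duhamel formula and the operator-norm bounds of the first paragraph. The main obstacle is the iteration over intervals: because Duhamel raises the order of the marginal, the short-time bound for $\de_j$ couples the whole tower $\{\de_{j+i}(t_0)\}_{i\ge0}$, and one must use the crude cap $\|\de_{j'}\|_{\cL_1}\le2$ to make those sums converge. Organizing the bookkeeping so that this yields exactly a square-root self-improvement of the weighted norm (hence the $N^{-2^{-m}}$ loss and the $2^m$ growth of the constant), and then matching the precise exponents together with pinning down the threshold $N\ge\exp(2^{16\norm{\Phi}_{L^\infty}t/\h+1}j)$, is the delicate step.
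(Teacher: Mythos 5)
Your proposal follows the paper's own proof almost step for step: the mild (Duhamel) form of the two hierarchies, the trace-norm bounds $\norm{\cC^\h_{j+1}}\le 2j\nPh/\h$ and $\norm{\cT^\h_j}\le j(j-1)\nPh/\h$ with $\cS_j$ an isometry, the step length $T_\h=\h/(16\nPh)$ so that an $\ell$-fold chain carries a weight $\binom{j+\ell-1}{\ell}8^{-\ell}\le 2^{j-1}4^{-\ell}$, the crude cap $\norm{\de_{j'}}_{\cL^1}\le 2$ for the tail, and the halving of the exponent of $N$ over each of the $\sim 16t\nPh/\h$ time steps. The one genuine divergence is the bookkeeping of the induction across time steps, and that is also the one place where your write-up is not yet a proof.

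The paper does not use a weighted sup norm. It sets $\vph(k,N)=2^{-k}\log N$ and proves by induction on $k$ the \emph{range-restricted} bound $A^k_j\le 2^{j+k-\vph(k,N)}$, valid only for $1\le j\le\vph(k,N)$. Because $\vph(k-1,N)=2\vph(k,N)$ and $j\le\vph(k,N)$, the inductive step closes \emph{exactly}, with no logarithmic loss, and the source terms $(j+n)^2/(4N)+2\cdot 4^{-n}$ (with $n=[\vph(1,N)]$ fixed once and for all) are absorbed under a condition depending on $j$ and $N$ only, not on $k$. Your substitute, $\mathcal E(t_0+\tau)\le C_1\sqrt{\mathcal E(t_0)}\,(1+\log(2/\mathcal E(t_0)))^{p}+C_1/N$, is right in spirit (I find $p=4$ rather than $2$ from your weights, and for $j\gtrsim\tfrac12\log_2(1/\mathcal E(t_0))$ one must invoke the trivial cap $\norm{\de_j(t_0+\tau)}_{\cL^1}\le 2$ directly, since the prefactor $2^{j-1}$ would otherwise make the sup over $j$ infinite). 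But the asserted final absorption is where the argument can actually fail: iterating your inequality accumulates a multiplicative loss $\prod_k L_{k-1}^{\,p\,2^{-(m-k)}}$ with $L_{k-1}=\log(2/\mathcal E(t_0+(k-1)\tau))$. If you bound each $L_{k-1}$ crudely by $\log N$, this is a factor $(\log N)^{2p}$, which at the threshold $\log N=2^{1+16t\nPh/\h}j$ equals $(2^{m}j)^{2p}$ and grows geometrically in $m$, while the available slack between your exponent $2^{-(m-1)}$ and the target exponent $2^{-1-m'}\log 2$ yields only a gain $N^{c2^{-m}}=e^{cj}$ there --- so for fixed $j$ and large $t\nPh/\h$ the inequality does not close. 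The iteration does close if one tracks $L_{k-1}\asymp 2^{-k}\log N$, whose decreasing powers of two exactly cancel the growth; this is precisely the content of the paper's restriction $j\le\vph(k,N)$, and in your version it must be written out rather than subsumed under ``absorbing the logarithmic corrections into the two hypotheses.''
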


\begin{proof}
We first rephrase \eqref {NShier} and \eqref{Hhier} in the following mild form:
\begin{equation}\label{MNShier}
F^N_j(t)=S(t)F_{0,j}+\int_0^td\tau\left(\cS(t-\tau)\frac{\cT_j^{\hbar}}{N}F^N_{j}(\tau)+\frac{N-j}{N}\cS(t-\tau)\cC_{j+1}^\hbar F^N_{j+1}(\tau)\right)\,,
\end{equation}
and
\begin{equation}
\label{MHhier}
F_j(t)=\cS(t)F_{0,j}+\int_0^td\tau\cS(t-\tau)\cC_{j+1}^\hbar F_{j+1}(\tau)\,.
\end{equation}
Here $\{F_{0,j }\}_{j\ge 1}$ denotes the common initial datum for both sequences, assumed to be factorized, i.e. $F_{0,j}=f_0^{\otimes j}$. 

Using the group property $\cS(t)=\cS(t-t_1)\cS(t_1)$ for all $0\leq t_1\leq t$, and splitting the integral on the right hand side of (\ref{MHhier}) into two parts as follows:
$$
\int_0^t=\int_0^{t_1}+\int_{t_1}^t\,, 
$$
we see that
\begin{equation}
\label{MNShierbis}
\ba
F^N_j(t)\!=\!\!S(t\!-\!t_1)F_{j}^N(t_1)\!+\!\!\int_{t_1}^t d\tau\left(\cS(t\!-\!\tau)\frac{\cT_j^{\hbar}}{N}F^N_{j}(\tau)\!+\!\frac {N\!-\!j}{N}\cS(t\!-\!\tau)\cC_{j+1}^\hbar F^N_{j+1}(\tau)\right)\,,
\ea
\end{equation}
and
\begin{equation}
\label{MHhierbis}
F_j(t)=\cS(t-t_1)F_{j}(t_1)+\int_{t_1}^td\tau \cS(t-\tau)\cC_{j+1}^\hbar F_{j+1}(\tau)\,,
\end{equation}
for all $t\ge t_1\ge 0$.

We are interested in bounding the difference
\begin{equation}
\Dlt _j (t):=F^N_j (t)-F_j (t)\,,
\end{equation}
which is recast as
\begin{equation}
\label{diff}
\Dlt _j(t)=\cR_j (t)+\int_{t_1}^tdt_2\cS(t-t_2)\cC_{j+1}^\hbar\Delta_{j+1}(t_2)\,,
\end{equation}
where
\be\label{R}
\ba
\cR_j(t):=\cS(t-t_1)\Dlt_{j}(t_1)&
\\
+\frac1N\int_{t_1}^tdt_2\cS(t-t_2)\cT_{j}^\hbar F^N_{j}(t_2)-\frac{j}N\int_{t_1}^tdt_2\cS(t-t_2)\cC_{j+1}^\hbar F^N_{j+1}(t_2)&\,.
\ea
\ee

The following estimates rephrase Lemma \ref{propesti} in the present context. For each integer $j>0$,
\begin{eqnarray}
\norm{\cS_j(t)F_j}_{\cL^1}&=&\norm{F_j}_{\cL^1}\,,\label{Stf1}
\\
\norm{\cC^\h_{j+1}F_{j+1}}_{\cL^1}&\leq&\frac{2j\norm\Phi_{L^\infty}}\hbar\norm{F_{j+1}}_{\cL^1}\,,\label{Cjbis}
\\
\norm{\cT^\h_jF_{j}}_{\cL^1}&\leq &\frac{j(j-1)\norm\Phi_{L^\infty}}\hbar\norm{F_j}_{\cL^1}\,.\label{Tjbis}
\end{eqnarray}
The first and the third inequalities are obvious consequences of the definitions. The second one is an obvious consequence of the fact that the linear map
$$
\cL^1(L^2(\bR^{d(j+1)}))\ni K\mapsto K_j\in\cL^1(L^2(\bR^{dj}))
$$
satisfies the bound
$$
\Tr_{L^2(\bR^{dj})}|K_j|\le\Tr_{L^2(\bR^{d(j+1)})}|K|\,.
$$
See Lemma 2.1 in \cite{BGM} for a detailed proof of this (not completely obvious) inequality.

As a consequence
\be\label{rest}
\|\cR(t)\|_{\cL^1}\leq\|\Dlt (t_1)\|_{\cL^1}+\frac{3|t-t_1|j^2\nPh}{N\h}\,.
\ee

Finally, we infer from \eqref{diff} and \eqref {rest} that
\be\label{dest}
\|\Dlt_j (t)\|_{\cL^1}\leq\|\Dlt_j (t_1)\|_{\cL^1}+\frac{3j^2}{16N}+\frac j{8T_\hbar}\int_{t_1}^tdt_2\|\Dlt_{j+1}(t_2)\|_{\cL^1}\,.
\ee
Here 
$$
T_\hbar=\frac{\h}{16\nPh}\mbox{ and  }0\leq t-t_1\leq T_\hbar\,.
$$ 
Notice that, with this choice, the Dyson expansions associated to the two hierarchies are absolutely convergent (uniformly in $N$) for any $t \in (t_1, t_1+T_\hbar) $.  However, in contrast with the previous section, we will not consider 
the full Dyson series (or infinite expansion), but only a finite truncation thereof. 

\smallskip
\noindent
\textit{Step 1: a truncated dominating series.}

Substituting $\Dlt_{j+1}$ in the right hand side of (\ref{dest}) by its expression in terms of $\Dlt_{j+1}$ given by (\ref{dest}), and iterating $n$ times this operation, we see that
\be\lb{iter}
\ba
\|\Dlt_j (t)\|_{\cL^1}\leq&\sum\limits_{\ell =0}^n\frac 1{8^\ell}\frac {j(j+1)\cdots(j+\ell-1)}{\ell !}\left(\|\Dlt_{j+\ell}(t_1)\|_{\cL^1}+\frac {3(j+\ell)^2}{16N}\right)
\\
&+\frac1{8^n}\frac{j(j+1)\cdots(j+n-1)}{n!}\cdot 2
\\
\leq&2^{j-1}\sum\limits_{\ell =0}^n\frac1{4^\ell}\|\Dlt_{j+\ell}(t_1)\|_{\cL^1}+\frac142^{j-1}\frac {(j+n)^2}N+\frac{2^j}{4^n}\,.
\ea
\ee
for $0\le t-t_1\leq T_\hbar$, and for all integer $n\ge 0$. Here we have used the identity
$$
\frac{2\nPh T}{\h}=\frac 18\,,
$$
and the inequality 
$$ 
\frac {j(j+1)\cdots(j+\ell-1)}{\ell !}={j+\ell-1\choose \ell} \leq 2^{j-1+\ell}\,,
$$
together with the formula
$$
\sum_{\ell\ge 0}\frac1{4^\ell}=\frac43\,.
$$
We also use the obvious bound  
$$
\|\Dlt^N_j (t)\|_{\cL^1}\leq 2\,,\qquad\hbox{ for all }j\ge 1\hbox{ and }t\ge 0\,,
$$ 
in order to bound the last term on the right hand side in (\ref{iter}).

At this point, we split the time interval $(0,t)$ into small intervals of length $T_\hbar$. Specifically, for $k=1,\dots,[\frac{t}{T_\hbar}]+1$, we set
$$
A_j^k:=\sup_{s \in((k-1)T_\hbar,kT_\hbar]}\|\Dlt_j(s)\|_{\cL^1}\,.
$$
Defining
\be\lb{DefVphi}
\vph (k,N):= 2^{-k} \log N\,,
\ee
we seek to prove that
\be\label{fbound}
A_j^k\leq 2^{j+k- \vph(k,N)}\quad\hbox{ for }1\le j\leq\vph (k,N)\,,\quad N\ge 2\,,\,\,k\ge 1\,.
\ee
The announced convergence estimate will easily follow from (\ref{fbound}).

\smallskip
\noindent
\textit{Step 2: proof of (\ref{fbound})}

We prove (\ref{fbound}) by induction on $k$.

For $k=1$, we set $t_1=0$ and choose $n=[\vph(1,N)]$ in (\ref{iter}). Since $\Dlt_j(0)=0$ for all $j=1,\ldots,N$, one has
$$
A_j^1\le 2^{j-1}\left(\frac{(j+n)^2}{4N}+\frac{2}{4^n}\right)\le 2^{j-1}\left(\frac{n^2}{N}+\frac{2}{4^n}\right)\,,
$$
so that (\ref{fbound}) is satisfied if
$$
\frac{n^2}{N}+\frac{2}{4^n}\le 2^{1-n}\,,
$$
since $n-1<\vph(1,N)$. Here $N=e^{2\vph(1,N)}\ge e^{2n}$, so that (\ref{fbound}) is satisfied provided that
$$
\frac{2^nn^2}{e^{2n}}+\frac{2}{2^n}\le 2\,,\quad\hbox{ for all }n\ge 1\,,
$$
which is implied in turn by the inequality
$$
\frac{2^{n/2}n}{e^n}\le 1\,,\quad\hbox{ for all }n\ge 1\,.
$$
To check this last inequality, observe that, for each $a>0$, one has
$$
\sup_{x>0}\left(xe^{-ax}\right)=\frac1{ae}\,,
$$
so that
$$
\frac{2^{n/2}n}{e^n}\le\frac2{(2-\ln 2)e}<1\,.
$$

Next we assume that the inequality (\ref{fbound}) holds for $k-1$. First observe that
$$
\|\Dlt_{j+\ell}((k-1)T_\hbar)\|_{\cL^1}\le A^{k-1}_{j+\ell}
$$
so that, by choosing $t_1=(k-1)T_\hbar$ in (\ref{iter}), we see that
\be\label{A}
A_j^k \leq 2^{j-1}\sum_{\ell =0}^n\frac1{4^\ell}A^{k-1} _{j+\ell}+\frac142^{j-1}\frac{(j+n)^2}N+\frac{2^j}{4^n}\,.
\ee
Since (\ref{fbound}) holds for $k-1$, we conclude that, for $1\le j,n\le\vph(k,N)$, one has
$$
\ba
A_j^k&\leq 2^{j-1}\sum_{\ell=0}^n\frac1{4^\ell}2^{k-1+j+\ell-\vph(k-1,N)}+\frac{2^{j-1}(j+n)^2}{4N}+\frac{2^j}{4^n}
\\
&\le 2^{j-1+k-1+\vph(k,N)-\vph(k-1,N)}\sum_{\ell=0}^n\frac1{2^\ell}+\frac{2^{j-1}(j+n)^2}{4N}+\frac{2^j}{4^n}
\\
&\le 2^{j+k-1+\vph(k,N)-\vph(k-1,N)}+\frac{2^{j-1}(j+n)^2}{4N}+\frac{2^j}{4^n}\,,
\ea
$$
since $\vph(k-1,N)=2\vph(k,N)$. Hence
$$
A_j^k\le\tfrac12\cdot 2^{j+k-\vph(k,N)}+\frac{2^{j-1}(j+n)^2}{4N}+\frac{2^j}{4^n}
$$
if $1\le j,n\le\vph(k,N)$. It remains to prove the existence of $N_0(j)$ such that 
$$
N\ge N_0(j)\Rightarrow\frac{2^{j-1}(j+n)^2}{4N}+\frac{2^j}{4^n}\le\tfrac12\cdot 2^{j+k-\vph(k,N)}
$$
for all $k\ge 1$. Equivalently, one seeks to prove the existence of $N_0(j)$ such that 
$$
N\ge N_0(j)\Rightarrow\frac{(j+n)^2}{4N}+\frac2{4^n}\le 2^{k-\vph(k,N)}\,,
$$
for some appropriate choice of $n$ in terms of $N$ and $k$ to be made later.

Observing that
$$
k\ge 1\Rightarrow 2^{k-\vph(k,N)}\ge 2^{1-\vph(1,N)}\,,
$$
it is obviously enough to find $N_0(j)$ such that
$$
N\ge N_0(j)\Rightarrow\frac{(j+n)^2}{4N}+\frac2{4^n}\le 2^{1-\vph(1,N)}\,.
$$
So far $n$ has been kept arbitrary; choose now $n=[\vph(1,N)]$, so that
$$
2^{\vph(1,N)-1}\left(\frac{(j+n)^2}{4N}+\frac2{4^n}\right)\le 2^n\left(\frac{(j+n)^2}{4e^{2n}}+\frac2{4^n}\right)\to 0
$$
for each $j\ge 1$ as $n\to+\infty$. Hence, for each $j\ge 1$, there exists $n_0(j)\ge 1$ such that 
$$
n\ge n_0(j)\Rightarrow 2^n\left(\frac{(j+n)^2}{4e^{2n}}+\frac2{4^n}\right)\le 1\,.
$$
Thus
$$
N\ge N_0(j):=e^{2n_0(j)+2}\Rightarrow \left(\frac{(j+n)^2}{4N}+\frac2{4^n}\right)\le 2^{1-\vph(1,N)}\le 2^{k-\vph(k,N)}\,.
$$
We conclude that (\ref{fbound}) holds for all $k\ge 1$.

\smallskip
\noindent
\textit{Step 3: end of the proof of Theorem \ref{comparison}.}

At this point, we observe that 
$$
k=[16\|\Phi\|_{L^\infty}t/\hbar]+1\,,
$$
so that
$$
\|\Dlt_j(s)\|_{\cL^1}\le\frac{2^{j+1+\frac{16\|\Phi\|_{L^\infty}t}{\hbar}}}{2^{2^{-1-\frac{16\|\Phi\|_{L^\infty}t}{\hbar}}\log N}}=\frac{2^{j+1+\frac{16\|\Phi\|_{L^\infty}t}{\hbar}}}{N^{2^{-1-\frac{16\|\Phi\|_{L^\infty}t}{\hbar}}\log 2}}\,,
$$
provided that $N\ge N_0(j)$ and $\vph(k,N)\ge j\ge 1$, i.e. $\log N\ge 2^kj$, which is in turn implied by the condition
$$
\log N\ge 2^{1+\frac{16\|\Phi\|_{L^\infty}t}{\hbar}}j\,.
$$
This completes the proof of Theorem \ref{comparison}.
\end{proof}


\section{Proof of Theorem \ref{mainlog}}\label{prooftheo}


We begin with the following elementary observation.

\begin{Lem}
For each trace-class operator $\rho$ on $L^2(\bR^d)$, one has
$$
\norm{\widetilde W_\hbar[\rho]}_{L^1(\bR^d\times\bR^d)}\leq\norm\rho_{\mathcal L^1(L^2(\bR^d))}\,.
$$
\end{Lem}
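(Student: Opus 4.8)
The plan is to reduce the estimate to the coherent-state resolution of the identity \eqref{DecompId}, via the polar decomposition of $\rho$. The only fact I need about phase-space integrals is that, for every \emph{positive} trace-class operator $A$ on $\fH=L^2(\bR^d)$,
\[
\int_{\bR^d\times\bR^d}\la p,q|A|p,q\ra\,dpdq=\Tr\!\left(A\int_{\bR^d\times\bR^d}|p,q\ra\la p,q|\,dpdq\right)=(2\pi\hbar)^d\,\Tr(A)\,,
\]
where the first equality uses positivity of the integrand (Tonelli, or approximation of $A$ by finite-rank operators) and the second uses $\int|p,q\ra\la p,q|\,dpdq=(2\pi\hbar)^d\,\Op^T_\hbar[1]=(2\pi\hbar)^d I_\fH$ from \eqref{DefToplitz}--\eqref{DecompId}. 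Taking $A=\rho\ge0$ and noting that $\widetilde W_\hbar[\rho]\ge0$ in that case, this already yields the lemma with equality: $\norm{\widetilde W_\hbar[\rho]}_{L^1}=(2\pi\hbar)^{-d}\int\la p,q|\rho|p,q\ra\,dpdq=\Tr(\rho)=\norm{\rho}_{\cL^1}$.

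For a general (possibly non-self-adjoint) trace-class $\rho$, I would use its polar decomposition $\rho=U|\rho|$, with $U$ a partial isometry and $|\rho|=(\rho^*\rho)^{1/2}\ge0$, and factor $|\rho|=|\rho|^{1/2}|\rho|^{1/2}$. Then, for each $(p,q)$,
\[
\la p,q|\rho|p,q\ra=\left(|\rho|^{1/2}U^*|p,q\ra\,\big|\,|\rho|^{1/2}|p,q\ra\right)_{L^2(\bR^d)}\,,
\]
so the Cauchy--Schwarz inequality in $L^2(\bR^d)$ gives the pointwise bound $|\la p,q|\rho|p,q\ra|^2\le\la p,q|U|\rho|U^*|p,q\ra\cdot\la p,q||\rho||p,q\ra$. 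Taking square roots, integrating over $(p,q)$, applying the Cauchy--Schwarz inequality in $L^2(\bR^d\times\bR^d)$, and then the displayed identity to the positive operators $U|\rho|U^*$ and $|\rho|$, one obtains
\[
\int_{\bR^d\times\bR^d}|\la p,q|\rho|p,q\ra|\,dpdq\le(2\pi\hbar)^d\,\big(\Tr(U|\rho|U^*)\big)^{1/2}\big(\Tr(|\rho|)\big)^{1/2}\,.
\]
Since $U^*U$ is an orthogonal projection, $\Tr(U|\rho|U^*)=\Tr\big(|\rho|^{1/2}U^*U|\rho|^{1/2}\big)\le\Tr(|\rho|)=\norm{\rho}_{\cL^1}$, so the right-hand side is at most $(2\pi\hbar)^d\norm{\rho}_{\cL^1}$; dividing by $(2\pi\hbar)^d$ and recalling $\widetilde W_\hbar[\rho](q;p)=(2\pi\hbar)^{-d}\la p,q|\rho|p,q\ra$ concludes the argument.

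I do not expect a genuine obstacle: the proof is two nested applications of Cauchy--Schwarz wrapped around the resolution of the identity, and the only steps deserving a line of justification are the interchange of integral and trace for positive $A$ and the bound $\Tr(U^*U\,|\rho|)\le\Tr(|\rho|)$, both routine. Alternatively, for self-adjoint $\rho$ one can run the same idea through the spectral decomposition $\rho=\sum_n\lambda_n|e_n\ra\la e_n|$, using $|\la p,q|\rho|p,q\ra|\le\sum_n|\lambda_n|\,|\la e_n|p,q\ra|^2$ and integrating term by term via $\int|\la e_n|p,q\ra|^2\,dpdq=(2\pi\hbar)^d$, which is perhaps the most transparent route and extends to the general case by polarization.
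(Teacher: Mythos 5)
Your proposal is correct, and its main route differs from the paper's in the key pointwise step. The paper writes $(2\pi\hbar)^d\widetilde W_\hbar[\rho](z)=\Tr(\rho|z\ra\la z|)$ and invokes the single operator inequality $|\Tr(\rho|z\ra\la z|)|\le\Tr(|\rho|\,|z\ra\la z|)$, justified by $|\rho|\pm\rho\ge 0$, before integrating against the resolution of the identity $\Op^T_\hbar[1]=I_\fH$ exactly as you do. That justification presupposes $\rho=\rho^*$ (and indeed the pointwise bound $|\la z|\rho|z\ra|\le\la z|\,|\rho|\,|z\ra$ can fail for non-normal $\rho$, e.g.\ for a rank-one $|u\ra\la v|$ with $u\perp v$), which is harmless in the paper since the lemma is only applied to $F_j^N(t)-F(t)^{\otimes j}$, a difference of self-adjoint operators. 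Your polar-decomposition argument, with the pointwise Cauchy--Schwarz bound $|\la z|\rho|z\ra|^2\le\la z|U|\rho|U^*|z\ra\,\la z|\,|\rho|\,|z\ra$ followed by an integral Cauchy--Schwarz and $\Tr(U|\rho|U^*)\le\Tr|\rho|$, costs one extra application of Cauchy--Schwarz but genuinely proves the statement as written, for arbitrary trace-class $\rho$. Your closing remark about the spectral decomposition $\rho=\sum_n\lambda_n|e_n\ra\la e_n|$ in the self-adjoint case is precisely the paper's bound written in the eigenbasis, so the two proofs coincide there.
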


\begin{proof}
For $z=(q,p)\in\bR^d\times\bR^d$, one has
$$
\widetilde W_\hbar[\rho](z)=(2\pi\hbar)^{-d}\la z |\rho|z\ra=(2\pi\hbar)^{-d}\Tr(\rho|z\ra\la z|)\,,
$$
so that
$$
(2\pi\hbar)^{d}|\widetilde W_\hbar[\rho](z)|=|\Tr(\rho|z\ra\la z|)|\le\Tr|\rho||z\ra\la z|
$$
since $|\rho|\pm\rho\ge 0$ and $|z\ra\la z|\ge 0$. Therefore
$$
\norm{\widetilde W_\hbar[\rho]}_{L^1(\bR^d\times\bR^d)}\leq\Tr\left(|\rho|(2\pi\hbar)^{-d}\int_{\bR^d\times\bR^d}|z\ra\la z|dz\right)=\norm\rho_{\mathcal L^1(L^2(\bR^d))}\,,
$$
since
$$
(2\pi\hbar)^{-d}\int_{\bR^d\times\bR^d}|z\ra\la z|dz=\Op^T_\hbar[1]=\hbox{Id}_{L^2(\bR^d)}\,.
$$
\end{proof}

\bigskip
\noindent
\textit{Step 1: the two fundamental estimates}

\smallskip
Therefore, applying Theorem \ref{comparison} implies that
\be\lb{EstimNU}
\|\widetilde W_\hbar[F_j^N(t)]-\widetilde W_\hbar[F(t)^{\otimes j}]\|_{L^1}\leq\frac{2^{j+1+\frac{16t\nPh}{\h}}}{N^{\left(2^{-\frac{16t\nPh}{\h}-1}\log{2}\right)}}\,,
\ee
for all $t\geq 0$ and all $N\geq\max\left(N_0(j),\exp\left(2^{\frac{16\nPh t}{\h}+1}j\right)\right)$.

\smallskip
Assume that $F^{in}$ is a T\"oplitz operator. Then one knows from \cite{GMouPaul} \rb{(using formula (17) of Theorem 2.4 coupled with point (2) of Theorem 2.3)} that
\be\lb{EstimU}
\MKd(\widetilde W_\hbar[F_j^N(t)],\widetilde W_\hbar[F(t)^{\otimes j}])^2\leq j\left(2d\hbar+\frac{C}N\right)e^{\Lambda t}+2jd\hbar\,,
\ee
where 
$$
\Lambda:=3+4\Lip(\grad V)^2\quad\hbox{ and }C:=\frac{8\norm{\nabla\Phi}_{L\infty}}\Lambda\,,
$$ 
while $\MKd$ is the Wasserstein distance of exponent $2$. We recall that, for each pair of Borel probability measures on $\bR^d$ with finite second order moments,
\be\lb{DefMKd}
\MKd(\mu,\nu):=\inf_{\pi\in\Pi(\mu,\nu)}\sqrt{\iint_{((\bR^{d}\times\bR^{d})^j)^2}|z-z'|^2\pi(dzdz')}\,,
\ee
where $\Pi(\mu,\nu)$ is the set of couplings of $\mu$ and $\nu$ defined before \eqref{DefMKo} --- see chapter 7 in \cite{VillaniTOT}.

\smallskip
\noindent
\textit{Step 2: an alternative}

\smallskip
Define
\be\lb{DefE}
\ba
E(N,\hbar,j,t)&
\\
:=\min\left(\norm{\widetilde W_\hbar[F_j^N(t)]-\widetilde W_\hbar[F(t)^{\otimes j}]}_{L^1}^2,\MKd(\widetilde W_\hbar[F_j^N(t)],\widetilde W_\hbar[F(t)^{\otimes j}])^2\right)&\,.
\ea
\ee
Thus, for all $t\geq 0$ and all $N\geq\max\left(N_0(j),\exp\left(2^{\frac{16\nPh t}{\h}+1}j\right)\right)$, one has
\be\lb{IneqE<1}
\ba
E(N,\hbar,j,t)\le\min\left(\frac{2^{2j+2+\frac{32t\nPh}{\h}}}{N^{\left(2^{-\frac{16t\nPh}{\h}}\log2\right)}},j\left(2d\hbar+\frac{C}N\right)e^{\Lambda t}+2jd\hbar\right)&
\\
\le A(N,\hbar,j,t)+\frac{jC}Ne^{\Lambda t}&\,,
\ea
\ee
with
\be\lb{DefAB}
\left\{
\ba
{}&A(N,\hbar,j,t):=\min\left(\frac{2^{2j+2+\frac{32t\nPh}{\h}}}{N^{\left(2^{-1-\frac{16t\nPh}{\h}}\log2\right)}},B(\hbar,j,t)\right)\,,
\\	\\
&B(\hbar,j,t):=2jd\hbar(1+e^{\Lambda t})\,.
\ea
\right.
\ee
Equivalently, the inequality (\ref{IneqE<1}) holds if 
$$
N\ge N_0(j)\quad\hbox{ and }\quad\hbar\ge\frac{16\nPh t\ln 2}{\log\log N-\log(2j)}=:\hat\hbar(N,j,t)\,.
$$

\smallskip
Observe now that the function $t\mapsto A(N,\hbar,j,t)$ is increasing, since it is the min of two increasing functions of $t$. Thus, for each $t^*>0$, each $t\in[0,t^*]$, each integer $j\ge 1$ and each $N\ge N_0(j)$ one has the following 
alternative:

\smallskip
\noindent
(a) either $\hbar\ge\hat\hbar(N,j,t)$, in which case,
$$
\ba
E(N,\hbar,j,t)\le A(N,\hbar,j,t)+\frac{jC}Ne^{\Lambda t}\le A(N,\hbar,j,t^*)+\frac{jC}Ne^{\Lambda t^*}&\,,
\ea
$$
(b) or $\hbar<\hat\hbar(N,j,t)$ and
$$
\ba
E(N,\hbar,j,t)\le\MKd(\widetilde W_{F_j^N(t)},\widetilde W_{F(t)^{\otimes j}})^2\le B(\hbar,j,t)+\frac{jC}Ne^{\Lambda t}&
\\
\le B(\hat\hbar(N,j,t^*),j,t^*)+\frac{jC}Ne^{\Lambda t^*}&\,.
\ea
$$

\smallskip
\noindent
\textit{Step 3: the global in $\hbar$ inequality}

\smallskip
Since the first and the second arguments in the min on the right hand side of (\ref{DefE}) are respectively decreasing and increasing functions of $\hbar$, one has
$$
A(N,\hbar,j,t^*)\le B(\hbar,j,t^*)\Bigg|_{\frac{2^{2j+2+\frac{32t^*\nPh}{\h}}}{N^{\left(2^{-\frac {16t^*\nPh}{\h}}\log2\right)}}=2jd\hbar(1+e^{\Lambda t^*})}\,.
$$
Observe that
$$
\ba
\frac{2^{2j+2+\frac{32t^*\nPh}{\h}}}{N^{\left(2^{-\frac{16 t^*\nPh}{\h}}\log2\right)}}&=2jd\hbar(1+e^{\Lambda t^*})
\\
&\Leftrightarrow
\frac{2^{2j+2+\frac{32t^*\nPh}{\h}}}{2jd\hbar(1+e^{\Lambda t^*})}=N^{\left(2^{-\frac{16t^*\nPh}{\h}}\log2\right)}
\\
&\Leftrightarrow
\left(\frac{2^{2j+2+\frac{32t^*\nPh}{\h}}}{2jd\hbar(1+e^{\Lambda t^*})}\right)^{2^{\frac {16t^*\nPh}{\h}}/\log{2}}=N
\ea
$$
is an equation for $\hbar$ in terms of $N$ with only one solution $\hbar(N,j,t^*)$, since the left hand side of the last equality is a continuous function of $\hbar$ decreasing from $+\infty$ to $0$ as $\hbar$ runs through $(0,+\infty)$.
Hence
$$
A(N,\hbar,j,t^*)\le B(\hbar(N,j,t^*),j,t^*)
$$
and we conclude from the alternative (a)-(b) above that, for each integer $j\ge 1$ and each $t^*>0$, one has the uniform in $\hbar$ bound
\be\lb{IneqEU}
\sup_{\hbar>0}\sup_{0\le t\le t^*}E(N,\hbar,j,t)\le\max(B(\hbar(N,j,t^*),j,t^*),B(\hat\hbar(N,j,t^*),j,t^*))+\frac{jC}Ne^{\Lambda t^*}\,.
\ee

\smallskip
\noindent
\textit{Step 4: an asymptotic equivalent for $\hbar(N,j,t^*)$}
We find that
$$
\log{N}\!=\!\frac{2^{\frac{16t^*\nPh}{\hbar(N,j,t^*)}}}{\log 2}\left(\left(2j\!+\!2+\!\frac{32t^*\nPh}{\hbar(N,j,t^*)}\right)\log 2\!-\!\log\left(2jd\hbar(N,j,t^*)(1\!+\!e^{\Lambda t^*})\right)\right),
$$
and therefore
$$
\ba
\log{\log{N}}=\frac{16t^*\nPh}{\hbar(N,j,t^*)}\log 2-\log\log 2&
\\
+\log\left(\left(2j+2+\frac{32t^*\nPh}{\hbar(N,j,t^*)}\right)\log2-\log\left(2jd\hbar(N,j,t^*)(1+e^{\Lambda t^*})\right)\right)&\,.
\ea
$$
Hence, in the limit as $N\to\infty$ with $t^*$ and $j$ kept fixed (which implies in particular that $N\ge N_0(j)$), one has
$$
\hbar(N,j,t^*)\sim\frac{16t^*\norm{\Phi}_{L^\infty}\log 2}{\log\log N}\sim\hat\hbar(N,j,t^*)\,.
$$

Since
$$
\frac{jC}Ne^{\Lambda t^*}=o\left(\frac1{\log\log N}\right)\,,
$$
for each $t^*,j$ fixed as $N\to\infty$, inequality (\ref{IneqEU}) implies that
\be\lb{IneqE2}
\ba
\sup_{\hbar>0}\sup_{0\le t\le t^*}E(N,\hbar,j,t)\lesssim B\left(\frac{16t^*\norm{\Phi}_{L^\infty}\log 2}{\log\log N},j,t^*\right)
\\
=\frac{32jdt^*\norm{\Phi}_{L^\infty}(1+e^{\Lambda t})\log 2}{\log\log N}\,,
\ea
\ee
for each $t^*,j$ fixed as $N\to\infty$.

\smallskip
The proof of Theorem \ref{mainlog} follows from (\ref{IneqE2}) and the following observation.

\begin{Lem}\label{last}
Let $\mu$ and $\nu$ be two Borel probability measures absolutely continuous with respect to the Lebesgue measure. Then
$$
\MKo(\mu,\nu)\leq\inf{(\norm{\mu-\nu}_{L^1},\MKd(\mu,\nu))}\,.
$$
\end{Lem}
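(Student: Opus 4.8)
The plan is to bound $\MKo(\mu,\nu)$ by each of the two quantities $\MKd(\mu,\nu)$ and $\norm{\mu-\nu}_{L^1}$ separately, and then take the infimum of the two upper bounds.

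\emph{First bound: $\MKo(\mu,\nu)\le\MKd(\mu,\nu)$.} For this inequality absolute continuity plays no role. I would fix an arbitrary coupling $\pi\in\Pi(\mu,\nu)$ and observe that, since $\min(1,r)\le r$ for $r\ge 0$ and $\pi$ is a probability measure, the Cauchy--Schwarz inequality gives
$$
\iint\min(1,|z-z'|)\,\pi(dzdz')\le\iint|z-z'|\,\pi(dzdz')\le\left(\iint|z-z'|^2\,\pi(dzdz')\right)^{1/2}.
$$
The left-hand side is an upper bound for $\MKo(\mu,\nu)$ for every $\pi$, so taking the infimum of the right-hand side over $\pi\in\Pi(\mu,\nu)$ yields $\MKo(\mu,\nu)\le\MKd(\mu,\nu)$ (the bound being vacuous if $\MKd(\mu,\nu)=+\infty$).

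\emph{Second bound: $\MKo(\mu,\nu)\le\norm{\mu-\nu}_{L^1}$.} Here I would invoke the Monge--Kantorovich duality formula recalled right after Theorem \ref{mainlog}, namely
$$
\MKo(\mu,\nu)=\sup_{L_1(\phi)\le 1}\left|\int\phi\,d\mu-\int\phi\,d\nu\right|,
$$
where $L_1(\phi)$ is the Lipschitz constant of $\phi$ for the distance $(z,z')\mapsto\min(1,|z-z'|)$. If $L_1(\phi)\le 1$ then $|\phi(z)-\phi(z')|\le\min(1,|z-z'|)\le 1$ for all $z,z'$, so $\phi$ has oscillation at most $1$; replacing $\phi$ by $\phi-c$ with $c$ the midpoint of the range of $\phi$ — which changes neither $L_1(\phi)$ nor $\int\phi\,d\mu-\int\phi\,d\nu$, since $\mu,\nu$ are probability measures — we may assume $\norm{\phi}_{L^\infty}\le\tfrac12$. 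Writing $\mu=f_\mu\,dz$ and $\nu=f_\nu\,dz$ (this is the only place where absolute continuity is used), one gets
$$
\left|\int\phi\,d\mu-\int\phi\,d\nu\right|=\left|\int\phi\,(f_\mu-f_\nu)\,dz\right|\le\norm{\phi}_{L^\infty}\norm{f_\mu-f_\nu}_{L^1}\le\tfrac12\norm{\mu-\nu}_{L^1}.
$$
Taking the supremum over admissible $\phi$ gives $\MKo(\mu,\nu)\le\tfrac12\norm{\mu-\nu}_{L^1}\le\norm{\mu-\nu}_{L^1}$.

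Combining the two bounds yields $\MKo(\mu,\nu)\le\inf(\norm{\mu-\nu}_{L^1},\MKd(\mu,\nu))$. I do not expect any genuine obstacle here; the only points to keep straight are that the duality formula quoted in the text is precisely the one attached to the bounded cost $\min(1,|z-z'|)$, so that its admissible test functions automatically have oscillation at most $1$, and that — should one prefer a purely primal argument for the second bound — it also follows directly from the maximal coupling $\pi=(\mathrm{id},\mathrm{id})_\#\bigl(\min(f_\mu,f_\nu)\,dz\bigr)+\tfrac1{1-m}\bigl(f_\mu-\min(f_\mu,f_\nu)\bigr)\otimes\bigl(f_\nu-\min(f_\mu,f_\nu)\bigr)$, with $m=\int\min(f_\mu,f_\nu)\,dz$: its diagonal part contributes $0$ to the cost while its off-diagonal part carries total mass $1-m=\tfrac12\norm{\mu-\nu}_{L^1}$ and a cost bounded pointwise by $1$.
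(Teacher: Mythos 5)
Your proof is correct. The first bound is obtained exactly as in the paper (compare $\min(1,|z-z'|)$ with $|z-z'|$ and apply Cauchy--Schwarz against the probability measure $\pi$; the paper squares the truncated cost first, you truncate first and then apply Cauchy--Schwarz to the linear cost — the two computations are interchangeable). For the second bound your primary route differs from the paper's: you argue on the dual side, noting that any $\phi$ with $L_1(\phi)\le 1$ has oscillation at most $1$ and can be recentred to satisfy $\norm{\phi}_{L^\infty}\le\tfrac12$, which immediately gives $\MKo(\mu,\nu)\le\tfrac12\norm{\mu-\nu}_{L^1}$. The paper instead works on the primal side, exhibiting the maximal coupling $\tilde\pi=\lambda(z)\delta(z-z')+(1-\textstyle\int\lambda)^{-1}(\mu-\lambda)\otimes(\nu-\lambda)$ with $\lambda=\min(\mu,\nu)$, whose off-diagonal mass $1-\int\lambda=\tfrac12\norm{\mu-\nu}_{L^1}$ bounds the truncated cost — precisely the alternative you sketch in your closing remark. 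The two arguments are dual to each other and both legitimately rely only on facts already quoted in the paper (the Kantorovich--Rubinstein duality for the bounded metric $\min(1,|z-z'|)$ is stated right after Theorem \ref{mainlog}); your dual version has the mild advantage of making the factor $\tfrac12$ transparent without constructing a coupling, while the paper's primal version avoids invoking duality altogether. Either way the stated inequality (which only needs the cruder constant $1$) follows.
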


\begin{proof}
Let $D_1(z,z')=\min{(|z-z'|,1)}$. Obviously $D_1(z,z')^2\leq |z-z'|^2$ so that, for each Borel probability measure $\pi$,
$$
\ba
\left(\iint\pi(dz,dz')D_1(z,z')\right)^2\leq&\iint\pi(dz,dz')\iint\pi(dz,dz')|z-z'|^2
\\
=&\iint\pi(dz,dz')|z-z'|^2\,.
\ea
$$ 
In particular, specializing the inequality above to couplings of $\mu,\nu$ implies that
$$
\MKo(\mu,\nu)\leq\MKd(\mu,\nu)\,.
$$

Consider now the particular coupling $\tilde\pi$ defined as follows:
$$
\tilde\pi:=
\left\{
\ba
{}&\mu(z)\delta(z-z')&\hbox{ if }\mu=\nu\,,
\\
&\lambda(z)\delta(z-z')+\frac{(\mu(z)-\lambda(z))(\nu(z')-\lambda(z'))}{\displaystyle 1-\int\lambda(dz'')}&\hbox{ if }\mu\not=\nu\,,
\ea
\right.
$$
where $\lambda=\min(\mu,\nu)$. (We have abused the notation, and designated the probability measures $\mu,\nu$ and their densities with respect to the Lebesgue mesaure by the same letter.) 

Thus
$$
\MKo(\mu,\nu)\leq\int\pi(dz,dz')\min{(|z-z'|,1)}\leq 1-\int\lambda(dz)\leq\norm{\mu-\nu}_{L^1}\,,
$$
and this completes the proof.
\end{proof}

\begin{appendix}

\section{Proof of Lemma \ref{oufoufouf}}\label{prouf}


Taking the Fourier transform of both sides of \eqref{Nw}, we easily arrive at the following equality:
$$
\ba
\partial_t\widetilde{f^N}(t,\un\xi_N;\un\eta_N)+\un\xi_N\cdot\nabla_{\un\eta_N}\widetilde{f^N}(t,\un\xi_N;\un\eta_N)&
\\
=\int_{\bR^{Nd}}{d\un h_N}{}\widehat{V_N}(\un h_N)\frac{2\sin{(\frac\hbar2\un\eta_N\cdot \un h_N)}}\hbar\widetilde{f^N}(t,\un \xi_N+\un h_N;\un \eta_N)&\,.
\ea
$$
Since 
$$
V_N(\un x_N):=\frac1{2N}\sum\limits_{1\le l\neq r\le N}\Phi(x_l-x_r)\,,
$$
one has
$$
\ba
\widehat{V_N}(\un h_N)&=\tfrac1{(2\pi)^{dN}}\int_{\bR^{dN}}V_N(\un x_N)e^{-i\un x_N\cdot\un h_N}d\un x_N
\\
&=\frac{1}{2N}\sum_{l\neq r=1}^N\widehat\Phi(h_l)\delta(h_l+h_r)\prod_{k\notin\{l,r\}}\delta(h_k)\,,
\ea
$$
with $\un h_N=(h_1,\dots,h_N)$. Therefore, according to \eqref{defh},
\be\label{Nj}
\ba
\int_{\bR^{Nd}} {d\un h_N}{}\widehat{V_N}(\un h_N)\frac{2\sin{(\frac\hbar2\un\eta_N\cdot \un h_N)}}\hbar\widetilde{f^N}(t,\un \xi_N+\un h_N;\un \eta_N)&
\\
=\frac1{2N}\sum_{1\le l\neq r\le N}\int_{\bR^d}{dh}{}\widehat\Phi(h)\frac{\sin{(\frac\hbar2(\eta_r-\eta_l)\cdot h)}}\hbar\widetilde{f^N}(t,\un \xi_N+\un h^r_N-\un h^l_N;\un \eta_N)&\,.
\ea
\ee
Observe that
$$
\widetilde{f^N_j}(\un\xi_j,\un\eta_j)=\widetilde{f^N}(\un\xi_j,0_{N-j};\un\eta_j,0_{N-j}),
$$
Moreover
$$
\ba
\int_{\bR^{Nd}} {d\un h_N}{}\widehat{V_N}(\un h_N)\frac{2\sin{(\frac\hbar2(\un\eta_j,0_{N-j})\cdot \un h_N)}}\hbar\widetilde{f^N}(t,(\un \xi_j,0_{N-j})-\un h_N;(\un \eta_j,0_{N-j}))
\\
=
\frac1N\widetilde{T_j}\widetilde{f_j^N}(t,\un\xi_j;\un\eta_j)
+
\frac{N-j}N\widetilde{C_{j+1}}\widetilde{f_{j+1}^N}(t,\un\xi_j;\un\eta_j)
\ea
$$
with
\be\label{defT}
\widetilde{T_j}\widetilde{f_j^N}(t,\un\xi_j;\un\eta_j)=\frac1{2}\sum_{l\neq r=1}^j\int_{\bR^d}{dh}{}\widehat\Phi(h)\frac{2\sin({\frac{\hbar}2(\eta_r-\eta_l)\!\cdot\!h})}\hbar\widetilde{f^N_j}(t,\un\xi_j+\un h^r_j-\un h^l_j;\un\eta_j)\,,
\ee
and
\be\label{defC}
\ba
\widetilde{C_{j+1}}\widetilde{f_{j+1}^N}(t,\un\xi_j;\un\eta_j)&
\\
=
\sum_{r=1}^j\int_{\bR^d}{dh}{}\widehat\Phi(h)\frac{2\sin(\frac{\hbar}{2}\eta_r\!\cdot\!h)}\hbar\widetilde{f_{j+1}^N}(t,(\un\xi_j+\un h^r_j,-h);(\un\eta_j,0))&\,.
\ea
\ee
Indeed the contribution given by $l,r\leq j$ in \eqref{Nj} is precisely $\frac1N$ times the right hand-side of \eqref{defT} by the fact that $\un h^m_N=\un h^m_j$ if $m\leq j$, so that 
$$
\widetilde{f^N}(t,(\un \xi_j,0_{N-j})+\un h^l_N-\un h^r_N;\un \eta_j,0_{N-j})=\widetilde{f^N_j}(t,(\un \xi_j+\un h^l_j-\un h^r_j;\un \eta_j)\,.
$$
Moreover the contribution given by $j< l,r\leq N$ in \eqref{Nj} vanishes because $\eta_l,\eta_r=0$ for  $\un\eta_N=\un\eta_j+0_{N-j}$ and $l,r>j$.

The terms $1\leq r\leq j<l\leq N$ and $1\leq l\leq j<r\leq N$ are the only ones remaining in the sum \eqref{Nj}. The first case will give a contribution to \eqref{Nj} which is
$$
\frac1{2N}\sum_{1\leq r\leq j<l\leq N}\int{dh}{}\widehat\Phi(h)\frac{2\sin(\frac{\hbar}{2}\eta_r\!\cdot\!h)}\hbar\widetilde{f_{}^N}(t,(\un\xi_j,0_{N-j})+\un h^r_N-\un h^l_j;\un\eta_j,0_{N-j})\,.
$$
Since $(\un\xi_j+0_{N-j})+\un h^r_N-\un h^l_N=(\un\xi_j+\un h^r_j,0_{N-j})-\un h^l_N$, one has
$$
\ba
\widetilde{f^N}(t,(\un\xi_j+\un h^r_j,0_{N-j})-\un h^l_N;\eta_j,0_{N-j})
=&
\widetilde{f^N}(t,(\un\xi_j+\un h^r_j,-h,0_{N-j-1});\eta_j,0_{N-j})
\\
=&
\widetilde{f_{j+1}^N}(t,\un\xi_j+\un h^r_j,-h;\un\eta_j,0)&\,.
\ea
$$
because $f^N(t,\cdot;\cdot)$ is symmetric.

Therefore
$$
\ba
\frac1{2N}\sum_{1\leq r\leq j<l\leq N}\int{dh}{}\widehat\Phi(h)\frac{2\sin(\frac{\hbar}{2}\eta_r\!\cdot\!h)}\hbar\widetilde{f_{}^N}(t,(\un\xi_j,0_{N-j})+\un h^r_N-\un h^l_N;(\un\eta_j,0_{N-j}))&
\\
=
\frac{N-j}{2N}\sum_{r=1}^j\widehat\Phi(h)\frac{2\sin(\frac{\hbar}{2}\eta_r\!\cdot\!h)}\hbar\widetilde{f_{j+1}^N}(t,(\un\xi_j+\un h^r_j,-h);(\un\eta_j,0))&\,.
\ea
$$
It is easy to check that the contribution coming from $1\leq l\leq j<r\leq N$ in \eqref{Nj} gives the same expression (as can be seen by using the substitution $h\mapsto-h$), and the sum of the two gives \eqref{defC}.

Finally we get that equation \eqref{Nw} is equivalent to
\be\label{kjhg}
\ba
\partial_t\widetilde{f^N_j}(t,\un\xi_j,\un\eta_j)+\un\xi_j\cdot\nabla_{\un\eta_j}\widetilde{f^N_j}(t,\un\xi_j,\un\eta_j)&
\\	\\
=
\frac{N-j}N\widetilde{C_{j+1}}\widetilde{f_{j+1}^N}(t,\un\xi_j;\un\eta_j)+\frac1N\widetilde{T_j}\widetilde{f_j^N}(t,\un\xi_j;\un\eta_j)&\,,\qquad j=1\dots N\,,
\ea
\ee
and the lemma follows from the formulas $\widetilde{T_jf_j}=\widetilde{T_j}\widetilde{f_j}$ and $\widetilde{C_{j+1}f_{j+1}}=\widetilde{C_{j+1}}\widetilde{f_{j+1}}$ after taking the inverse Fourier transform of both sides of \eqref{kjhg}.
\end{appendix}


\vskip 1cm
\textbf{Acknowledgements}: This work has been partially carried out thanks to the support of the A*MIDEX project (n$^o$ ANR-11-IDEX-0001-02) funded by the ``Investissements d'Avenir" French Government program, managed by the French 
National Research Agency (ANR) \rb{and the LIA AMU-CNRS-ECM-INdAM
Laboratoire Ypatia des Sciences Math\'ematiques (LYSM)}. T.P. thanks also the Dipartimento di Matematica, Sapienza Universit\`a di Roma, for its kind hospitality during the completion of this work.


\end{document}